\begin{document}
\title{The Rudin-Frol\'ik order and the Ultrapower Axiom}
\author{Gabriel Goldberg}
\maketitle
\begin{abstract}
We study the structure of the Rudin-Frol\'ik order on countably complete ultrafilters under the assumption that this order is directed. This assumption, called the Ultrapower Axiom, holds in all known canonical inner models. It turns out that assuming the Ultrapower Axiom, much more about the Rudin-Frol\'ik order can be determined. Our main theorem is that under the Ultrapower Axiom, a countably complete ultrafilter has at most finitely many predecessors in the Rudin-Frol\'ik order. In other words, any wellfounded ultrapower (of the universe) is the ultrapower of at most finitely many ultrapowers.
\end{abstract}
\section{Introduction}
This paper is about the structure of the class of countably complete ultrafilters under a simplifying assumption called the Ultrapower Axiom. We study how countably complete ultrafilters are built up as finite iterations of simple ultrafilters called {\it irreducible ultrafilters}. 

Irreducibility is defined in terms of an order on ultrafilters called the Rudin-Frol\'ik order which serves as a measure of how ultrapower embeddings can be factored as iterated ultrapowers. The Ultrapower Axiom itself is the assumption that the Rudin-Frol\'ik order is directed on countably complete ultrafilters. From this hypothesis, it turns out to be possible to derive many more properties of the Rudin-Frol\'ik order. 

Our main theorem answers the following question: given a wellfounded ultrapower \(M\) of the universe, how many distinct ultrapowers can \(M\) be an ultrapower of? This is essentially the question of whether a countably complete ultrafilter can have infinitely many predecessors in the Rudin-Frol\'ik order, which was raised in \cite{Kanamori} and \cite{Ketonen2}. Gitik \cite{Gitik} proved that the existence of such an ultrafilter is consistent with ZFC; this yields a wellfounded ultrapower of \(V\) that is the ultrapower of infinitely many distinct ultrapowers. The main result of this paper is that the Ultrapower Axiom implies the opposite answer: every ultrapower is the ultrapower of at most finitely many ultrapowers.

\section{Notation}
\begin{defn}
Suppose \(P\) is an inner model and \(U\) is an \(P\)-ultrafilter. We write \((M_U)^P\) to denote the ultrapower of \(P\) by \(U\) using functions in \(P\). We write \((j_U)^P\) to denote the ultrapower embedding from \(P\) to \((M_U)^P\) associated to \(U\). For any function \(f\in P\), we write \([f]^P_U\) to denote the point represented by \(f\) in \((M_U)^P\).
\end{defn}

We will often omit the parentheses in this notation, writing \(j_U^P\) and \(M_U^P\).

\begin{defn}
Suppose \(M\) and \(N\) are inner models. An elementary embedding \(i : M \to N\) is an {\it ultrapower embedding} if there is an \(M\)-ultrafilter \(U\) such that \(i = (j_U)^M\). An elementary embedding \(i : M \to N\) is an {\it internal ultrapower embedding} if there is an \(M\)-ultrafilter \(U\in M\) such that \(i = (j_U)^M\).

We say \(N\) is an {\it ultrapower} of \(M\) if there is an ultrapower embedding from \(M\) to \(N\). We say \(N\) is an {\it internal ultrapower} of \(M\) if there is an internal ultrapower embedding from \(M\) to \(N\). 
\end{defn}

Our definition of an ultrapower embedding reflects our focus on countably complete ultrafilters: {\it for example, an elementary embedding \(j : V\to M\) where \(M\) is illfounded is never an ultrapower embedding by our definition}. We note that there is a characterization of ultrapower embeddings that does not mention ultrafilters:
\begin{lma}
An elementary embedding \(j : M\to N\) is an ultrapower embedding if there is some \(a\in N\) such that \(N = H^N(j[M]\cup \{a\})\).
\end{lma}
\section{The Rudin-Frol\'ik order}
We now define the Rudin-Frol\'ik order and formulate the Ultrapower Axiom.

\begin{defn}
The {\it Rudin-Frol\'ik order} is defined on countably complete ultrafilters \(U\) and \(W\) by setting \(U\D W\) if there is an internal ultrapower embedding \(i : M_U\to M_W\) such that \(j_W = i\circ j_U\).
\end{defn}

By generalizing the definition of an internal ultrapower embedding, one can define the Rudin-Frol\'ik order on countably incomplete ultrafilters as well. A well-known fact about this more general order is that its restriction to ultrafilters on \(\omega\) forms a tree: the predecessors of an ultrafilter on \(\omega\) are linearly ordered by the Rudin-Frol\'ik order up to isomorphism. In particular, the Rudin-Frol\'ik order is not directed on ultrafilters on \(\omega\): otherwise it would be linear, contradicting a well-known result of Kunen \cite{Kunen} that states that even the Rudin-Keisler order on ultrafilters on \(\omega\) is not linear. 

Although the directedness of the Rudin-Frol\'ik order fails in essentially the simplest case, directedness {\it can} hold if one restricts to countably complete ultrafilters. This is the content of the Ultrapower Axiom:

\begin{ua}
The restriction of the Rudin-Frol\'ik order to countably complete ultrafilters is directed.
\end{ua}

\begin{defn}
Suppose \(M_0\), \(M_1,\) and \(N\) are transitive models of set theory. We write \((i_0,i_1) : (M_0,M_1)\to N\) to denote that \(i_0 : M_0\to N\) and \(i_1 : M_1\to N\) are elementary embeddings.
\end{defn}

\begin{defn}
Suppose \(j_0 : V\to M_0\) and \(j_1 : V\to M_1\) are ultrapower embeddings. A {\it comparison} of \((j_0,j_1)\) is a pair \((i_0,i_1) : (M_0,M_1)\to N\) of internal ultrapower embeddings such that \(i_0 \circ j_0 = i_1\circ j_0\).
\end{defn}

The following lemma, which is immediate given the definitions, partially explains the relationship between the Ultrapower Axiom and the comparison lemma of inner model theory.

\begin{lma}
The following are equivalent:
\begin{enumerate}[(1)]
\item The Ultrapower Axiom holds.
\item Every pair of ultrapower embeddings admits a comparison.
\end{enumerate}
\end{lma}
\section{A basic fact}
The following lemma highlights an important difference between the Rudin-Frol\'ik order and the Rudin-Keisler order.

\begin{prp}\label{RFUnique}
Suppose \(U\D W\). Then there is a unique internal ultrapower embedding \(k : M_U\to M_W\) such that \(j_W = k\circ j_U\).
\end{prp}

For the proof we use the following basic schema.

\begin{lma}\label{DefinableUnique}
Any two \(\Sigma_2\)-definable embeddings from \(V\) into a common inner model agree on the ordinals.
\begin{proof}[Sketch]
Suppose towards a contradiction that \(\alpha\) is the least ordinal such that there exist \(\Sigma_2\)-definable elementary embeddings from \(V\) into the same inner model that differ at \(\alpha\). Note that \(\alpha\) is definable without parameters. Thus if \(j,j' : V\to M\) are elementary embeddings, \(j(\alpha) = j'(\alpha)\). This contradicts the definition of \(\alpha\).
\end{proof}
\end{lma}

\begin{proof}[Proof of \cref{RFUnique}]
Suppose \(k,k' : M_U\to M_W\) are internal ultrapower embeddings with \(j_W = k\circ j_U = k' \circ j_U\). Then by \cref{DefinableUnique} applied in \(M_U\), \(k\restriction \text{Ord} = k'\restriction \text{Ord}\). Now \(M_U = H^{M_U}(j_U[V]\cup \text{Ord})\) and \(k\restriction (j_U[V]\cup \text{Ord}) = k'\restriction (j_U[V]\cup \text{Ord})\). Therefore \(k = k'\).
\end{proof}
\section{Canonical comparisons}
In this section we prove that under UA, any pair of countably complete ultrafilters \(U_0\) and \(U_1\) has a {\it least} upper bound in the Rudin-Frol\'ik order. By this we mean that there is a  countably complete ultrafilter \(W\) such that \(U_0,U_1\D W\) and for any \(W'\) such that \(U_0,U_1\D W'\), \(W\D W'\). (So \(W\) is the least upper bound of \(U_0,U_1\) among countably complete ultrafilters; we do not know whether it is least among arbitrary ultrafilters.) 

The notation is slightly less cumbersome if one works with ultrapower embeddings instead of ultrafilters, so this is how we will proceed.

\begin{defn}
Suppose \(j_0: V\to M_0\) and \(j_1 : V\to M_1\) are ultrapower embeddings. A comparison \((i_0,i_1) : (M_0,M_1)\to N\) of \((j_0,j_1)\) is {\it canonical} if for any other comparison \((i'_0,i'_1) : (M_0,M_1)\to N'\) there is an elementary embedding \(h : N \to N'\) such that \(i_0' = h \circ i_0\) and \(i'_1 = h\circ i_1\).
\end{defn}

We begin by proving the existence and uniqueness of canonical comparisons assuming UA. Uniqueness is actually provable in ZFC by an almost standard category theoretic argument. The only twist is that we use the following standard fact which (essentially) appears as Theorem 9.2 in \cite{Comfort}. We supply a proof in the countably complete case, which is significantly easier.

\begin{thm}\label{RKFact}
Suppose \(k : V\to N\) is an ultrapower embedding and \(e : N \to N\) is an elementary embedding with \(e\circ k = k\). Then \(e\) is the identity.
\begin{proof}
Let \(\alpha\) be the least ordinal such that \(N = H^N(k[V]\cup \{\alpha\})\). It suffices to show that \(e(\alpha) = \alpha\). Suppose towards a contradiction that \(e(\alpha) > \alpha\). Then since \(N = H^N(k[V]\cup \{\alpha\})\), there is some function \(f\) such that \(e(\alpha) = k(f)(\alpha) = e(k(f))(\alpha)\). Therefore \(N\) satisfies the statement that there is some \(\beta < e(\alpha)\) such that \(e(\alpha) = e(k(f))(\beta)\). By the elementarity of \(e : N \to N\), \(N\) satisfies that there is some \(\beta < \alpha\) such that \(\alpha = k(f)(\beta)\). But then \(N = H^N(k[V]\cup \{\beta\})\), contrary to the minimality of \(\alpha\).
\end{proof}
\end{thm}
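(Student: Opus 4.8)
The plan is to reduce everything to showing that $e$ fixes one carefully chosen point. Since $k$ is an ultrapower embedding, write it as an ultrapower map $k = (j_U)^V$; then $N = (M_U)^V = H^N(k[V]\cup\{a\})$ with $a = [\text{id}]^V_U$, and a routine coding argument --- transport $a$ across a bijection lying in the range of $k$ --- lets us replace $a$ by an ordinal. So let $\alpha$ be the \emph{least} ordinal with $N = H^N(k[V]\cup\{\alpha\})$. The hypothesis $e\circ k = k$ means precisely that $e$ fixes $k[V]$ pointwise, so $e$ is determined by the single value $e(\alpha)$: any $x\in N$ has the form $x = t^N(k(c),\alpha)$ for a Skolem term $t$ and a parameter $c\in V$, whence $e(x) = t^N(k(c),e(\alpha))$. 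Hence it is enough to prove $e(\alpha) = \alpha$.

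First note that $e(\alpha)\ge\alpha$, since an elementary self-embedding of a transitive model never moves an ordinal strictly downward (least-counterexample argument). So suppose toward a contradiction that $e(\alpha) > \alpha$; I will contradict the minimality of $\alpha$ by exhibiting a strictly smaller ordinal that generates $N$ over $k[V]$.

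The key move is to reflect the inequality $\alpha < e(\alpha)$ back through $e$. Because $e(\alpha)$ is an ordinal of $N = H^N(k[V]\cup\{\alpha\})$, we may write $e(\alpha) = k(f)(\alpha)$ for a suitable $f\in V$ --- take $f(\xi)$ to be the value at $\xi$ of the relevant Skolem term when it is an ordinal, and $0$ otherwise, so that $N$ regards $k(f)(\alpha)$ as the ordinal $e(\alpha)$. Since $e$ fixes $k[V]$ we also have $e(k(f)) = k(f)$, so $e(\alpha) = e(k(f))(\alpha)$. Now in $N$ the sentence ``there is $\beta < e(\alpha)$ with $e(\alpha) = e(k(f))(\beta)$'' holds, witnessed by $\beta = \alpha$. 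But --- using $e(k(f)) = k(f)$ once more --- this sentence is exactly the $e$-image of ``there is $\beta < \alpha$ with $\alpha = k(f)(\beta)$'', so by elementarity of $e$ that sentence already holds in $N$. Fixing such a $\beta < \alpha$, we get $\alpha = k(f)(\beta)\in H^N(k[V]\cup\{\beta\})$, hence $N = H^N(k[V]\cup\{\alpha\})\subseteq H^N(k[V]\cup\{\beta\})$, contradicting the minimality of $\alpha$. Therefore $e(\alpha) = \alpha$, and $e$ is the identity.

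The one place to tread carefully --- and the only real content --- is this reflection step: one must check that the parameter in the pulled-back sentence is genuinely $k(f)$ rather than $e(k(f))$, which is exactly where $e\circ k = k$ gets used, and that $f$ can be arranged so that $N$ really believes $k(f)(\alpha)$ to be an ordinal. Reducing to an ordinal generator and the fact $e(\alpha)\ge\alpha$ are both standard.
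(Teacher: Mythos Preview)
Your proof is correct and follows essentially the same approach as the paper's own proof: choose the least ordinal generator \(\alpha\), write \(e(\alpha) = k(f)(\alpha) = e(k(f))(\alpha)\), and reflect the existential statement through \(e\) to contradict minimality. You supply a bit more justification than the paper does (existence of an ordinal generator, the inequality \(e(\alpha)\ge\alpha\), and the explicit identification of the sentence being pulled back), but the argument is the same.
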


\begin{lma}\label{CanonicalUnique}
Suppose \(j_0: V\to M_0\) and \(j_1 : V\to M_1\) are ultrapower embeddings. Then \((j_0,j_1)\) has at most one canonical comparison.
\begin{proof}
Suppose \((i_0,i_1) : (M_0,M_1)\to N\) and \((i_0',i_1') : (M_0,M_1)\to N'\) are canonical comparisons of \((j_0,j_1)\). We will show that \(N = N'\), \(i_0 = i_0'\), and \(i_1 = i_1'\).  

Fix \(h : N \to N'\) such that \(i_0' = h\circ i_0\) and \(i_1' = h\circ i_1\), and fix \(h' : N'\to N\) such that \(i_0 = h'\circ i_0'\) and \(i_1 = h'\circ i_1'\). Let \(k = i_0 \circ j_0 = i_1\circ j_1\), so \(k : V\to N\) is an ultrapower embedding. Let \(e = h'\circ h\). Then \(e : N \to N\) and \(e\circ k = k\). By \cref{RKFact}, \(e = \text{id}\). It follows that \(h : N \to N'\) is surjective, and hence \(N = N'\) and \(h = \text{id}\). Thus \(i_0 = h\circ i_0 = i'_0\) and \(i_1 = h\circ i_1 = i'_1\), as desired.
\end{proof}
\end{lma}

The existence of canonical comparisons will use UA. The plan is to show that under UA, comparisons with a certain easily obtainable minimality property are actually canonical comparisons.

\begin{defn}\label{MinimalityDef}
A pair \((i_0,i_1) : (M_0,M_1)\to N\) is {\it minimal} if \[N = H^N(i_0[M_0]\cup i_1[M_1])\]
\end{defn}

We do not assume that \(i_0\) and \(i_1\) are ultrapower embeddings in the definition of minimality. Therefore the following lemma has some content.

\begin{lma}\label{MinimalUltrapowers}
Suppose \(j_0 : V\to M_0\) and \(j_1 : V\to M_1\) are ultrapower embeddings and \((i_0,i_1) : (M_0,M_1)\to N\) is minimal and satisfies \(i_0\circ j_0 = i_1\circ j_1\). Then \(i_0\) and \(i_1\) are ultrapower embeddings.
\begin{proof}
We show that \(i_0\) is an ultrapower embedding. Fix \(a\in M_1\) such that \(M_1 = H^{M_1}(j_1[V]\cup \{a\})\). Then \[N = H^N(i_0[M_0]\cup i_1[M_1]) = H^N(i_0[M_0]\cup i_1\circ j_1[V]\cup \{i_1(a)\})= H^N(i_0[M_0]\cup \{i_1(a)\})\]
It follows that \(i_0 : M_0\to N\) is an ultrapower embedding (given by the ultrafilter derived from \(i_0\) using \(i_1(a)\)).

The fact that \(i_1\) is an ultrapower embedding follows by a similar argument.
\end{proof}
\end{lma}

The following fairly obvious lemma is often useful:

\begin{lma}\label{InternalClose}
Suppose \(i' : M\to N'\) is an internal embedding, \(i  : M \to N\) is an ultrapower embedding, and \(h :  N \to N'\) is an elementary embedding such that \(h\circ  i = i'\). Then \(i\) is an internal ultrapower embedding.
\begin{proof}
Since \(i\) is an ultrapower embedding, we may fix \(a\in N\) such that \( N = H^{ N}(i[M]\cup \{a\})\). Letting \(U\) be the ultrafilter derived from \(i'\) using \(h(a)\), it is not hard to show that \(i = (j_U)^M\). Since \(i'\) is an internal embedding, \(U\in M\), and hence \(i\) is an internal ultrapower embedding.
\end{proof}
\end{lma}

A hull argument now yields the existence of a minimal comparison of any two comparable ultrapowers.

\begin{lma}\label{Hull}
Suppose \(j_0 : V\to M_0\) and \(j_1 : V\to M_1\) are ultrapower embeddings and \((i_0',i_1') : (M_0,M_1)\to N'\) is a comparison of \((j_0,j_1)\). Then \((j_0,j_1)\) admits a unique minimal comparison \((i_0,i_1) : (M_0,M_1)\to N\) such that there is an elementary embedding \(h : N \to N'\) with \(i_0' = h\circ i_0\) and \(i_1' = h\circ i_1\).
\begin{proof}
Let \(H = H^{N'}(i_0'[M_0]\cup i_1'[M_1])\) and let \(h : N \to N'\) be the inverse of the transitive collapse of \(H\). Let \(i_0 = h^{-1}\circ i_0'\) and \(i_1 = h^{-1}\circ i_1'\). Easily \(i_0\circ j_0 = i_1\circ j_1\). By \cref{MinimalUltrapowers}, \(i_0\) and \(i_1\) are ultrapower embeddings. By \cref{InternalClose}, \(i_0\) and \(i_1\) are internal ultrapower embeddings. Since \(H = H^{N'}(i_0'[M_0]\cup i_1'[M_1])\), tracing through the isomorphism \(h : N \to H\) easily shows \(N = H^N(i_0[M_0]\cup i_1[M_1])\). Thus \((i_0,i_1)\) is a minimal comparison of \((j_0,j_1)\).

We finally show the uniqueness of \((i_0,i_1)\). Suppose  \((i^*_0,i^*_1) : (M_0,M_1)\to N^*\) is a minimal comparison of \((j_0,j_1)\) such that there is an elementary embedding \(h^* : N^* \to N'\) with \(i_0' = h^*\circ i^*_0\) and \(i_1' = h^*\circ i^*_1\). We must have \(h^*[N^*] = H^N(i_0'[M_0']\cup i_1'[M_1']) = H\), and therefore \(h^*\) is the inverse of the transitive collapse of \(H\), so that \(N^* = N\) and \(h^* = h\). It follows that \(i_0^* = h^{-1}\circ i_0' = i_0\) and \(i_1^* = h^{-1}\circ i_1' = i_1\).
\end{proof}
\end{lma}

\begin{lma}
Canonical comparisons are minimal.
\begin{proof}
Suppose \(j_0 : V\to M_0\) and \(j_1 : V\to M_1\) are ultrapower embeddings and \((i'_0,i'_1) : (M_0,M_1)\to N'\) is the canonical comparison of \((j_0,j_1)\). By \cref{Hull}, there is a minimal comparison \((i_0,i_1) : (M_0,M_1)\to N\) and an elementary embedding \(h : N\to N'\) such that \(i_0' = h\circ i_0\) and \(i_1' =h\circ i_1\). It follows immediately that \((i_0,i_1)\) is a canonical comparison. Thus by \cref{CanonicalUnique}, \(N = N'\), \(i_0 = i_0'\), and \(i_1 = i_1'\). It follows that \((i_0',i_1')\) is minimal, as desired.
\end{proof}
\end{lma}

Combining this with the following lemma, one can strengthen the definition of a canonical comparison to assert that the embedding \(h : N \to N'\) is unique.

\begin{lma}\label{MinimalEmb}
Suppose \((i_0,i_1) : (M_0,M_1)\to N\) is minimal and \((i_0',i_1') : (M_0,M_1)\to N'\). Then there is at most one embedding \(h : N \to N'\) such that \(i_0' = h\circ i_0\) and \(i_1'= h\circ i_1\).
\begin{proof}
The requirements  \(i_0' = h\circ i_0\) and \(i_1'= h\circ i_1\) determine \(h \restriction i_0[M_0]\) and \(h\restriction i_1[M_1]\). Since \(N = H^N(i_0[M_0]\cup i_1[M_1])\), this determines \(h\) on all of \(N\).
\end{proof}
\end{lma}

We also have the following fact that yields the exact relationship between minimality and canonicity:

\begin{prp}\label{MinimalCanonical}
Suppose \(j_0 : V\to M_0\) and \(j_1 : V\to M_1\) are ultrapower embeddings. Then a comparison \((i_0,i_1) : (M_0,M_1)\to P\) of \((j_0,j_1)\) is the canonical comparison of \((j_0,j_1)\) if and only if it is the unique minimal comparison of \((j_0,j_1)\).
\begin{proof}
Suppose \((i_0,i_1) : (M_0,M_1)\to N\) is the canonical comparison of \((j_0,j_1)\). Suppose \((i_0',i_1') : (M_0,M_1)\to N'\) is a minimal comparison of \((j_0,j_1)\). There is some \(h : N \to N'\) such that \(i_0' = h\circ i_0\) and \(i_1' = h\circ i_1\). Since \(i_0'[M_0]\cup i_1'[M_1]\subseteq \text{ran}(h)\), the minimality of \((i_0',i_1')\) implies that \(h\) is surjective. Therefore \(h\) is the identity, and hence \(N = N'\), \(i_0 = i_0'\), and \(i_1 = i_1'\). Thus \((i_0,i_1)\) is the unique minimal comparison of \((j_0,j_1)\).

Conversely suppose \((i_0,i_1) : (M_0,M_1)\to N\) is the unique minimal comparison of \((j_0,j_1)\). Suppose \((i_0',i_1'):(M_0,M_1)\to N'\) is a comparison of \((j_0,j_1)\). Then by \cref{Hull}, there is a minimal comparison \((i_0^*,i_1^*) : (M_0,M_1)\to N^*\) and an elementary embedding \(h: N^*\to N'\) such that \(i_0' = h\circ i_0^*\) and \(i_1' = h\circ i_1^*\). By the uniqueness of \((i_0,i_1)\), \(N^* = N\), \(i_0^* = i_0\) and \(i_1^* = i_1\). Therefore \(h : N \to N'\) witnesses the canonicity of \((i_0,i_1)\) with respect to \((i_0',i_1')\).
\end{proof}
\end{prp}

The following lemma shows that assuming UA one can ``compare comparisons."

\begin{lma}[UA]\label{CompareComparison}
Suppose \(j_0: V\to M_0\) and \(j_1 : V\to M_1\) are ultrapower embeddings. Suppose \((i_0,i_1) : (M_0,M_1)\to N\) and \((i_0',i_1') : (M_0,M_1)\to N'\) are comparisons of \((j_0,j_1)\). Then there is a pair of internal ultrapower embeddings \((k,k') : (N,N')\to P\) such that \(k\circ i_0 = k'\circ i_0'\) and \(k\circ i_1 = k'\circ i_1'\).
\begin{proof}
Let \(\ell = i_0\circ j_0 = i_1\circ j_1\) and let \(\ell' = i_0'\circ j_0 = i_1'\circ j_1\). Thus \(\ell : V\to N\) and \(\ell' : V\to N'\) are ultrapower embeddings. Let \((k,k') : (N,N')\to P\) be any comparison of \((\ell,\ell')\). We claim \((k,k')\) is as desired.

We show \(k\circ i_0 = k'\circ i_0'\). First, since \(k\circ \ell = k'\circ \ell'\) by the definition of a comparison, we have that \(k\circ i_0\restriction j_0[V] = k'\circ i_0'\restriction j_0[V]\). Moreover since \(k\circ i_0\) and \(k'\circ i_0'\) are elementary embeddings from \(M_0\) into the same target model \(P\), by \cref{DefinableUnique}, \(k\circ i_0\restriction \text{Ord} = k'\circ i'_0\restriction \text{Ord}\). Since \(M_0 = H^{M_0}(j_0[V]\cup \text{Ord})\), it follows that \(k\circ i_0 = k'\circ i_0'\) on all of \(M_0\).

A similar argument shows that \(k\circ i_1 = k'\circ i_1'\), and this completes the proof.
\end{proof}
\end{lma}

\begin{thm}[UA]\label{CanonicalExistence}
Every pair of ultrapower embeddings admits a canonical comparison.
\begin{proof}
Fix ultrapower embeddings \(j_0 : V\to M_0\) and \(j_1 : V \to M_1\). It suffices to show that \((j_0,j_1)\) has a unique minimal comparison. Suppose \((i_0,i_1) : (M_0,M_1)\to N\) and \((i_0',i_1') : (M_0,M_1)\to N'\) are minimal comparisons of \((j_0,j_1)\). By \cref{CompareComparison}, there is a pair of internal ultrapower embeddings \((k,k'): (N,N')\to P\) such that \(k\circ i_0 = k'\circ i_0'\) and \(k\circ i_1 = k'\circ i_1'\). By the uniqueness clause of \cref{Hull}, it follows that \((i_0,i_1) = (i_0',i_1')\) as desired.
\end{proof}
\end{thm}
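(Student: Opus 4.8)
The plan is to reduce everything to \cref{MinimalCanonical}, which says that a canonical comparison of $(j_0,j_1)$ is precisely a \emph{unique} minimal comparison. So it suffices to show that, under UA, every pair $(j_0,j_1)$ of ultrapower embeddings has a minimal comparison and that it has only one. Existence costs almost nothing: UA in its ``every pair of ultrapower embeddings admits a comparison'' form gives some comparison $(i_0',i_1') : (M_0,M_1)\to N'$ of $(j_0,j_1)$, and \cref{Hull} then extracts a minimal comparison lying below it.

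The substance is uniqueness. Let $(i_0,i_1) : (M_0,M_1)\to N$ and $(i_0',i_1') : (M_0,M_1)\to N'$ be two minimal comparisons of $(j_0,j_1)$. The idea is to line them both up over a single model and then let the rigidity already proved in \cref{Hull} collapse the ambiguity. Apply \cref{CompareComparison} to these two comparisons: it produces a pair of internal ultrapower embeddings $(k,k') : (N,N')\to P$ with $k\circ i_0 = k'\circ i_0'$ and $k\circ i_1 = k'\circ i_1'$. Since a composition of internal ultrapower embeddings is again an internal ultrapower embedding, $(k\circ i_0,\, k\circ i_1) : (M_0,M_1)\to P$ is a comparison of $(j_0,j_1)$; and now $(i_0,i_1)$ via $k$, and $(i_0',i_1')$ via $k'$, are both minimal comparisons of $(j_0,j_1)$ factoring this one and the same comparison. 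By the uniqueness clause of \cref{Hull} they must coincide: $N = N'$, $i_0 = i_0'$, and $i_1 = i_1'$. Hence $(j_0,j_1)$ has exactly one minimal comparison, which by \cref{MinimalCanonical} is its canonical comparison.

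The real work is concentrated in \cref{CompareComparison} — the ability to ``compare comparisons'' — which is where UA and the rigidity of $\Sigma_2$-definable embeddings (\cref{DefinableUnique}, \cref{RKFact}) are spent; given that lemma, the only thing to be careful about is the bookkeeping, i.e.\ recognizing that once the common refinement $P$ has been built, the two candidate minimal comparisons are competing to be \emph{the} minimal comparison lying below a single fixed comparison of $(j_0,j_1)$, so that \cref{Hull} forces them to agree. No additional computation is required.
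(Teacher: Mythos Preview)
Your proof is correct and follows the same route as the paper: reduce to \cref{MinimalCanonical}, obtain existence of a minimal comparison from UA together with \cref{Hull}, and for uniqueness apply \cref{CompareComparison} to any two minimal comparisons and then invoke the uniqueness clause of \cref{Hull} below the common refinement in \(P\). Your write-up is simply more explicit than the paper's about why \cref{Hull} applies at the last step (namely, that \((k\circ i_0,k\circ i_1)=(k'\circ i_0',k'\circ i_1')\) is a single comparison into \(P\) beneath which both candidates sit), which is a good thing.
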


Under UA, canonical comparisons automatically have a stronger universal property:

\begin{defn}
Suppose \(j_0 : V\to M_0\) and \(j_1 : V\to M_1\) are ultrapower embeddings. A comparison \((i_0,i_1) : (M_0,M_1)\to N\) of \((j_0,j_1)\) is a {\it pushout} if for any comparison \((i_0',i_1') : (M_0,M_1)\to N'\) there is a unique internal ultrapower embedding \(h : N\to N'\) such that \(i_0' = h\circ i_0\) and \(i_1' = h\circ i_1\).
\end{defn}

We use the word  ``pushout" since if \((i_0,i_1) : (M_0,M_1)\to N\) is a pushout comparison of \((j_0,j_1)\), then \(N\) is the pushout of \((j_0,j_1)\) in the category of wellfounded ultrapowers of \(V\) with internal ultrapower embeddings. We will prove from UA that canonical comparisons are pushouts. 

We first need another lemma which we often use in conjunction with \cref{InternalClose}:

\begin{lma}\label{FactorUltra}
Suppose \(i' : M\to N'\) is an ultrapower embedding, \(i: M\to N\) is an elementary embedding, and \(h : N\to N'\) is an elementary embedding such that \(i' = h\circ i\). Then \(h\) is an ultrapower embedding.
\begin{proof}
Fix \(a\in N'\) such that \(N' = H^{N'}(i'[M]\cup \{a\})\). Since \(i'[M]\subseteq h[N]\), \(N'  = H^{N'}(h[N]\cup \{a\})\). Therefore \(h\) is an ultrapower embedding.
\end{proof}
\end{lma}

\begin{lma}[UA]\label{Pushout}
Canonical comparisons are pushouts.
\begin{proof}
Suppose \(j_0:V\to M_0\) and \(j_1 : V\to M_1\) are ultrapower embeddings. Suppose \((i_0,i_1) : (M_0,M_1) \to N\) is the canonical comparison of \((j_0,j_1)\). Suppose \((i_0',i_1') : (M_0,M_1)\to N'\) is a comparison of \((j_0,j_1)\). Let \(h : N \to N'\) be the unique elementary embedding with \(i_0' = h\circ i_0\) and \(i_1'= h\circ i_1\). We must show \(h\) is an internal ultrapower embedding. By \cref{FactorUltra}, since \(i_0' = h \circ i_0\), \(h\) is an ultrapower embedding. By \cref{CompareComparison}, fix internal ultrapower embeddings \((k,k') : (N,N')\to P\) such that \(k\circ i_0 = k'\circ i_0'\) and \(k\circ i_1 = k'\circ i_1'\). Since \(k\circ i_0 = k'\circ h\circ i_0\) and \(k\circ i_1 = k'\circ h \circ i_1\), we must have \(k = k'\circ h\) by \cref{MinimalEmb}. Therefore by \cref{InternalClose}, \(h\) is an internal ultrapower embedding, as desired.
\end{proof}
\end{lma}

Translating this from the language of ultrapower embeddings to the language of ultrafilters gives a result on the structure of the Rudin-Frol\'ik order:

\begin{cor}[UA]\label{SemiLattice}
Any pair of countably complete ultrafilters has a least upper bound in the Rudin-Frol\'ik order.
\begin{proof}
Suppose \(U_0\) and \(U_1\) are countably complete ultrafilters. Let \[(i_0,i_1) : (M_{U_0},M_{U_1})\to N\] be the canonical comparison of \((j_{U_0},j_{U_1})\). Let \(\ell : V \to N\) be the ultrapower embedding \(i_0\circ j_{U_0} = i_1\circ j_{U_1}\). Fix any ultrafilter \(W\) such that \(\ell = j_W\). Then \(U_0\D W\) since \(i_0 : M_{U_0}\to M_W\) is an internal ultrapower embedding with \(j_W = i_0\circ j_{U_0}\). Similarly \(U_1\D W\). Thus \(W\) is an upper bound of \(U_0,U_1\) in the Rudin-Frol\'ik order. 

Suppose \(W'\) is another upper bound of \(U_0,U_1\) in the Rudin-Frol\'ik order. Fix \(i_0': M_{U_0}\to M_{W'}\) and \(i_1' :M_{U_1}\to M_{W'}\) witnessing this, so \(i_0'\circ j_{U_0} = j_{W'} = i_1'\circ j_{U_1}\). Thus \((i_0',i_1'): (M_{U_0},M_{U_1})\to M_{W'})\) is a comparison of \((j_{U_0},j_{U_1})\). By \cref{Pushout}, there is an internal ultrapower embedding \(h : N\to M_{W'}\) such that \(i_0' = h\circ i_0\) and \(i_1' = h\circ i_1\). Since \(h : M_W\to M_{W'}\) satisfies \[h\circ j_W = h\circ i_0\circ j_{U_0} = i_0'\circ j_{U_0} = j_{W'}\] \(h\) witnesses that \(W\D W'\). 
\end{proof}
\end{cor}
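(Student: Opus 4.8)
The plan is to translate the problem into the language of ultrapower embeddings, read off the least upper bound from the canonical comparison furnished by \cref{CanonicalExistence}, and verify minimality using the pushout property of \cref{Pushout}. So, given countably complete ultrafilters \(U_0\) and \(U_1\), I would form \(j_{U_0} : V\to M_{U_0}\) and \(j_{U_1} : V\to M_{U_1}\) and let \((i_0,i_1) : (M_{U_0},M_{U_1})\to N\) be the canonical comparison of \((j_{U_0},j_{U_1})\).

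First I would argue that \(N\) arises as the ultrapower of \(V\) by a countably complete ultrafilter. The composite \(\ell = i_0\circ j_{U_0} = i_1\circ j_{U_1} : V\to N\) is an ultrapower embedding (an ultrapower embedding followed by an internal ultrapower embedding), and since \(i_0\) is internal its target \(N\) is wellfounded; hence the ultrafilter \(W\) on \(V\) derived from \(\ell\) is countably complete, and \(j_W = \ell\). The factorization \(\ell = i_0\circ j_{U_0}\) with \(i_0\) internal shows \(U_0\D W\), and symmetrically \(U_1\D W\), so \(W\) is an upper bound of \(U_0,U_1\) in the Rudin-Frol\'ik order.

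For minimality, suppose \(W'\) is any other upper bound, witnessed by internal ultrapower embeddings \(i_0' : M_{U_0}\to M_{W'}\) and \(i_1' : M_{U_1}\to M_{W'}\) with \(i_0'\circ j_{U_0} = j_{W'} = i_1'\circ j_{U_1}\); then \((i_0',i_1') : (M_{U_0},M_{U_1})\to M_{W'}\) is a comparison of \((j_{U_0},j_{U_1})\). Since the canonical comparison is a pushout (\cref{Pushout}), there is an internal ultrapower embedding \(h : N\to M_{W'}\) with \(i_0' = h\circ i_0\) and \(i_1' = h\circ i_1\), and then \(h\circ j_W = h\circ i_0\circ j_{U_0} = i_0'\circ j_{U_0} = j_{W'}\), so \(h\) witnesses \(W\D W'\). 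The real content is entirely in \cref{CanonicalExistence} and \cref{Pushout}, both of which use UA; the remaining work is dictionary translation, and the one point meriting care is checking that the vertex of the canonical comparison is genuinely the ultrapower of \(V\) by a \emph{countably complete} ultrafilter, so that \(W\) lies in the domain of the Rudin-Frol\'ik order. I do not expect any serious obstacle beyond this.
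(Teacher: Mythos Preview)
Your proposal is correct and follows essentially the same route as the paper's proof: build \(W\) from the canonical comparison of \((j_{U_0},j_{U_1})\), then use the pushout property to show \(W\D W'\) for any other upper bound \(W'\). The only difference is that you spell out why \(W\) is countably complete, a point the paper leaves implicit given its standing convention that ultrapower embeddings have wellfounded targets.
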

\subsection{Uniqueness of ultrapower embeddings}
There is a surprisingly powerful consequence of all this category theory:

\begin{thm}[UA]\label{CanonicalInternal}
Suppose \(j_0:V\to M_0\) and \(j_1 : V\to M_1\) are ultrapower embeddings and \((i_0,i_1): (M_0,M_1)\to N\) is the canonical comparison of \((j_0,j_1)\). Suppose \(h : N\to N'\) is an ultrapower embedding. Then the following are equivalent:
\begin{enumerate}[(1)]
\item \(h\) is an internal ultrapower embedding of \(N\).
\item \(h\) is an amenable class of both \(M_0\) and \(M_1\).
\end{enumerate}
\end{thm}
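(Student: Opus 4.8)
The plan is to prove the two implications separately; UA enters only in $(2)\Rightarrow(1)$, and there only through properties of the canonical comparison already in hand — \cref{Pushout}, \cref{MinimalEmb}, and the minimality of canonical comparisons. Since $(i_0,i_1)$ is a comparison, $i_0$ and $i_1$ are internal ultrapower embeddings, so I would fix $U_0\in M_0$ and $U_1\in M_1$ with $i_0=(j_{U_0})^{M_0}$ and $i_1=(j_{U_1})^{M_1}$. The underlying idea is that amenability of $h$ to $M_0$ should amount to $h\circ i_0$ being an internal ultrapower embedding of $M_0$ (and likewise for $M_1$); granting this, $(2)\Rightarrow(1)$ becomes a pushout argument and $(1)\Rightarrow(2)$ an ultrafilter-sum computation.

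For $(1)\Rightarrow(2)$ (which needs only ZFC), suppose $h=(j_W)^N$ with $W\in N$. Since $i_0$ is internal, $N=(M_{U_0})^{M_0}$, so I would fix $f\in M_0$ with $W=[f]^{M_0}_{U_0}$; since $N'$ is wellfounded, $N$, and hence $M_0$, sees that $U_0$ and $f$ (pointwise) give wellfounded ultrapowers. Let $Z_0\in M_0$ be the $U_0$-sum of $f$. The standard analysis of iterated ultrapowers, carried out inside $M_0$, gives $(M_{Z_0})^{M_0}=(M_W)^N=N'$ and $(j_{Z_0})^{M_0}=(j_W)^N\circ(j_{U_0})^{M_0}=h\circ i_0$; in particular $M_0\models U_0\D Z_0$. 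Applying \cref{RFUnique} inside $M_0$ then yields an internal ultrapower embedding $k=(j_{W'})^N$ of $N$, with $W'\in N$, such that $k\circ i_0=(j_{Z_0})^{M_0}=h\circ i_0$; thus $h$ and $k$ agree on $i_0[M_0]$. Since $h$ and $k$ are both $\Sigma_2$-definable over $N$ from a parameter, \cref{DefinableUnique} applied in $N$ gives $h\restriction\text{Ord}=k\restriction\text{Ord}$, and since $N=H^N(i_0[M_0]\cup\text{Ord})$ — exactly as $M_U=H^{M_U}(j_U[V]\cup\text{Ord})$ in the proof of \cref{RFUnique} — it follows that $h=k$. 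So $h$ is coded over $M_0$ by $U_0$ and $W'$, i.e.\ $h$ is an amenable class of $M_0$; the symmetric argument with $i_1,U_1$ gives amenability to $M_1$.

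For $(2)\Rightarrow(1)$, suppose $h$ is an amenable class of both $M_0$ and $M_1$. Then $h\circ i_0:M_0\to N'$ is an internal ultrapower embedding of $M_0$ and $h\circ i_1:M_1\to N'$ is an internal ultrapower embedding of $M_1$. Since $(i_0,i_1)$ is a comparison of $(j_0,j_1)$, we have $(h\circ i_0)\circ j_0=h\circ(i_0\circ j_0)=h\circ(i_1\circ j_1)=(h\circ i_1)\circ j_1$, so $(h\circ i_0,h\circ i_1):(M_0,M_1)\to N'$ is itself a comparison of $(j_0,j_1)$. Because $(i_0,i_1)$ is the canonical comparison, it is a pushout by \cref{Pushout}, so there is an internal ultrapower embedding $g:N\to N'$ with $g\circ i_0=h\circ i_0$ and $g\circ i_1=h\circ i_1$. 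Now $g$ and $h$ are both embeddings $N\to N'$ composing with $i_0$ and $i_1$ to give $h\circ i_0$ and $h\circ i_1$, so \cref{MinimalEmb} (canonical comparisons being minimal) gives $g=h$; hence $h$ is an internal ultrapower embedding.

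I expect the main obstacle to be the bookkeeping around the ultrafilter-sum correspondence in $(1)\Rightarrow(2)$: verifying that $Z_0=\sum_{U_0}f$ really lies in $M_0$ and that $(j_{Z_0})^{M_0}=h\circ i_0$ with target $N'$ (which requires $M_0$ to recognize that $U_0$ and $f$ give wellfounded ultrapowers), and then making the identification $h=k$ rigorous rather than merely obtaining agreement on $i_0[M_0]$. The other point needing care is unwinding the definition of ``amenable class'' so that it is seen to be equivalent to — or at least to imply and be implied by — $h\circ i_0$ (resp.\ $h\circ i_1$) being an \emph{internal} ultrapower embedding, since that equivalence is exactly what links the two halves of the argument. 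Everything else is formal: the pushout diagram followed by the uniqueness-of-factor argument from minimality.
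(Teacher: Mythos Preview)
Your argument for $(2)\Rightarrow(1)$ is exactly the paper's: form the comparison $(h\circ i_0,h\circ i_1)$, invoke the pushout property (\cref{Pushout}) to get an internal $g:N\to N'$ with the right factorizations, and then use minimality (\cref{MinimalEmb}) to conclude $g=h$. The ``point needing care'' you flag---why amenability of $h$ to $M_0$ makes $h\circ i_0$ an \emph{internal} ultrapower embedding of $M_0$---is precisely what the paper isolates as \cref{AmenableComposition}: the composite is amenable to $M_0$, and an amenable ultrapower embedding is internal since the derived ultrafilter can be read off inside $M_0$ from $j\restriction P(X)^{M_0}\in M_0$.

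Your $(1)\Rightarrow(2)$ is correct but enormously overworked; the paper dispatches it with the single word ``Clearly.'' The point you are circling around is immediate: since $i_0$ is an internal ultrapower embedding of $M_0$, $N$ is a definable inner model of $M_0$, so in particular $N\subseteq M_0$. If $h=(j_W)^N$ with $W\in N$, then for any $x\in N$ the restriction $h\restriction x$ is computed inside $N$ and hence lies in $N\subseteq M_0$. The same works for $M_1$. Your ultrafilter-sum $Z_0$, the invocation of \cref{RFUnique} to produce $k$, and the $h=k$ argument via \cref{DefinableUnique} are all unnecessary: you already have $W\in N\subseteq M_0$, so $h$ is coded in $M_0$ by $U_0$ and $W$ from the outset, with no need to manufacture a second witness $W'$.
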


For the proof we need a trivial lemma about compositions of amenable embeddings.
\begin{lma}\label{AmenableComposition}
Suppose \(M\) and \(N\) are transitive models of ZFC, \(i : M\to N\) is an amenable embedding, and \(h : N\to P\) is an elementary embedding that is an amenable class of \(M\) in the sense that \(h\restriction x\in M\) for all \(x\in N\). Then \(h\circ i\) is an amenable embedding of \(M\).
\begin{proof}
Suppose \(x\in M\). We must show that \((h\circ i)\restriction x\in M\). But \(i\restriction x\in M\) and \(h\restriction i(x)\in M\), so \((h\circ i)\restriction x = (h\restriction i(x))\circ (i\restriction x)\in M\).
\end{proof}
\end{lma}

\begin{proof}[Proof of \cref{CanonicalInternal}]
Clearly (1) implies (2). Conversely, assume (2). Let \(i_0' = h\circ i_0\) and \(i_1' = h\circ i_1\). Then \((i_0',i_1') :(M_0,M_1)\to N'\) is a comparison of \((j_0,j_1)\); the fact that \(i_0'\) and \(i_1'\) are internal ultrapower embeddings follows from \cref{AmenableComposition}. Note that \(h\) is the unique elementary embedding such that \(i_0' = h\circ i_0\) and \(i_1' = h\circ i_1\). Therefore since \((i_0,i_1)\) is a pushout by \cref{Pushout}, \(h\) is an internal ultrapower embedding, as desired.
\end{proof}

As an example of an application of all this diagram chasing, we have the following theorem.

\begin{thm}[UA]\label{DivisionInclusion}
Suppose \(j_0 : V\to M_0\) and \(j_1 : V\to M_1\) are ultrapower embeddings. The following are equivalent:
\begin{enumerate}[(1)]
\item There is an internal ultrapower embedding \(i_0: M_0\to M_1\) such that \(j_1 = i_0\circ j_0\).
\item \(M_1\) is an internal ultrapower of \(M_0\).
\item \(M_1\subseteq M_0\).
\end{enumerate}
\begin{proof}
One shows (1) implies (2) implies (3) implies (1). The only implication that is not obvious is that (3) implies (1). 

Assume (3). Let \((i_0,i_1) : (M_0,M_1)\to N\) be the canonical comparison of \((j_0,j_1)\). Since \(M_1\subseteq M_0\), \(i_1\) is an amenable class of \(M_0\). Of course, \(i_1\) is an amenable class of \(M_1\). Therefore by \cref{CanonicalInternal}, \(i_1\restriction N\) is an internal ultrapower embedding of \(N\). It follows that \(i_1\) is an \(\alpha\)-supercompact embedding for all ordinals \(\alpha\). Thus \(i_1\) is the identity. It follows that \(i_0 : M_0\to M_1\) is an internal ultrapower embedding such that \(j_1 = i_0\circ j_0\). Thus (1) holds.
\end{proof}
\end{thm}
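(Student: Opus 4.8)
The plan is to establish the cycle \( (1)\Rightarrow(2)\Rightarrow(3)\Rightarrow(1) \). The first two implications are routine and use nothing beyond ZFC: an internal ultrapower embedding \( i_0 : M_0\to M_1 \) is by definition a witness that \( M_1 \) is an internal ultrapower of \( M_0 \), and if \( M_1 = (M_W)^{M_0} \) for some \( W\in M_0 \) then \( M_1 \) is a definable subclass of \( M_0 \), so \( M_1\subseteq M_0 \). Thus the whole content of the theorem, and the only place UA is used, is \( (3)\Rightarrow(1) \).

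To prove \( (3)\Rightarrow(1) \), I would assume \( M_1\subseteq M_0 \) and, invoking UA, form the canonical comparison \( (i_0,i_1) : (M_0,M_1)\to N \) provided by \cref{CanonicalExistence}. It then suffices to show that \( i_1 \) is the identity: for then \( N = M_1 \), and \( i_0 : M_0\to M_1 \) is an internal ultrapower embedding with \( i_0\circ j_0 = i_1\circ j_1 = j_1 \), which is precisely (1). To show \( i_1 \) is trivial, I would first note that since the comparison embeddings are internal, \( N = (M_{U})^{M_1} \) for some \( U\in M_1 \), so in particular \( N\subseteq M_1\subseteq M_0 \); hence \( i_1\restriction N \) is a legitimate elementary map with domain \( N \), and one verifies it is an ultrapower embedding of \( N \).

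The key step is to apply \cref{CanonicalInternal} to \( i_1\restriction N \). Its amenability hypothesis holds: \( i_1 \), being an internal ultrapower embedding, is an amenable class of \( M_1 \) — its restrictions to sets are computed within \( M_1 \) — and therefore, since \( M_1\subseteq M_0 \), an amenable class of \( M_0 \) as well; the same then holds of \( i_1\restriction N \). So \cref{CanonicalInternal} yields that \( i_1\restriction N \) is an internal ultrapower embedding of \( N \). Propagating this closure down the chain of models produced by \( i_1 \) forces \( i_1 \) to be an \( \alpha \)-supercompact embedding for every ordinal \( \alpha \); but by Kunen's theorem a nontrivial ultrapower embedding cannot be this closed, so \( i_1 \) has no critical point, i.e. \( i_1 = \text{id} \).

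The main obstacle I anticipate is precisely this last step. The category-theoretic infrastructure — the existence of canonical comparisons, their pushout property, and above all \cref{CanonicalInternal} — does almost all the work mechanically, but passing from "\( i_1\restriction N \) is internal over \( N \)" (together with \( N\subseteq M_1 \)) to "\( i_1 \) is \( \alpha \)-supercompact for all \( \alpha \)" requires care about which restrictions of \( i_1 \) live in which of the nested models \( \cdots\subseteq N\subseteq M_1\subseteq M_0 \), and then a clean appeal to the Kunen-style nontriviality bound carried out inside \( M_1 \).
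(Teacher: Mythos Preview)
Your proposal is correct and follows essentially the same route as the paper: take the canonical comparison \((i_0,i_1):(M_0,M_1)\to N\), use \(M_1\subseteq M_0\) to see that \(i_1\) (hence \(i_1\restriction N\)) is amenable to both \(M_0\) and \(M_1\), apply \cref{CanonicalInternal} to conclude \(i_1\restriction N\) is internal to \(N\), deduce that \(i_1\) is \(\alpha\)-supercompact for all \(\alpha\), and invoke Kunen to get \(i_1=\text{id}\). You even flag the two points the paper leaves terse---that \(i_1\restriction N\) is genuinely an ultrapower embedding of \(N\), and the passage from internality over \(N\) to full supercompactness---so your write-up is, if anything, slightly more careful than the original.
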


\begin{cor}[UA]
If \(M\) is an ultrapower of \(V\) then there is a unique ultrapower embedding \(j : V\to M\).
\begin{proof}
Suppose \(j_0,j_1 : V\to M\). By \cref{DivisionInclusion}, since \(M\subseteq M\), there is an internal ultrapower embedding \(i : M \to M\) such that \(j_1 = i\circ j_0\). By \cref{RKFact}, \(i\) is the identity, so \(j_0 = j_1\).
\end{proof}
\end{cor}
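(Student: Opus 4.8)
The plan is to read the corollary off from \cref{DivisionInclusion} together with \cref{RKFact}. Fix ultrapower embeddings \(j_0,j_1 : V\to M\); we must show \(j_0 = j_1\). First I would apply \cref{DivisionInclusion} with \(M_0 = M_1 = M\): since trivially \(M\subseteq M\), the implication (3) \(\Rightarrow\) (1) produces an internal ultrapower embedding \(i : M\to M\) with \(j_1 = i\circ j_0\). Then I would apply \cref{DivisionInclusion} once more, now with the roles of \(j_0\) and \(j_1\) interchanged, to obtain an internal ultrapower embedding \(i' : M\to M\) with \(j_0 = i'\circ j_1\).

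Next I would chain the two embeddings together. Since \(i\circ i'\circ j_1 = i\circ j_0 = j_1\), the elementary embedding \(e = i\circ i' : M\to M\) satisfies \(e\circ j_1 = j_1\); as \(j_1 : V\to M\) is an ultrapower embedding, \cref{RKFact} applies and gives \(e = \mathrm{id}_M\), that is, \(i\circ i' = \mathrm{id}_M\). Hence \(i\) is surjective, and since every elementary embedding is injective, \(i\) is an \(\in\)-isomorphism of the transitive class \(M\); by uniqueness of transitive collapses, \(i = \mathrm{id}_M\). Therefore \(j_1 = i\circ j_0 = j_0\), as desired.

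The point that requires care is the very first step: a single application of \cref{DivisionInclusion} yields \(i\) with \(j_1 = i\circ j_0\) but no a priori control on \(i\), so \cref{RKFact} cannot be invoked directly — that would require already knowing \(i\circ j_0 = j_0\), i.e. the conclusion. The ``reversed'' application of \cref{DivisionInclusion} is exactly what breaks this circularity, by handing \(i\) a one-sided inverse to which \cref{RKFact} can be applied. Everything else is routine. (Alternatively, one could unwind the proof of \cref{DivisionInclusion} in the symmetric case \(M_0 = M_1\): there both legs \(i_0,i_1\) of the canonical comparison \((i_0,i_1):(M,M)\to N\) are internal ultrapower embeddings that are \(\alpha\)-supercompact for every \(\alpha\), hence both equal the identity, and then \(j_0 = i_0\circ j_0 = i_1\circ j_1 = j_1\) at once.)
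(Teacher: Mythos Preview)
Your argument is correct and uses the same two lemmas as the paper. The paper's own proof is terser: it obtains \(i : M\to M\) with \(j_1 = i\circ j_0\) from \cref{DivisionInclusion} and then invokes \cref{RKFact} immediately to conclude \(i = \text{id}\). As you rightly point out, this is not quite justified as written, since \cref{RKFact} requires \(e\circ k = k\) for some ultrapower embedding \(k : V\to M\), and \(i\circ j_0 = j_0\) is precisely the conclusion we are after. Your second, reversed application of \cref{DivisionInclusion} to produce a one-sided inverse \(i'\) is exactly the step that makes the appeal to \cref{RKFact} legitimate; the paper's proof should be read as tacitly doing this (or, alternatively, as appealing to Kunen's inconsistency theorem inside \(M\) to rule out a nontrivial internal embedding \(M\to M\)). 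Your parenthetical alternative, unwinding the proof of \cref{DivisionInclusion} in the symmetric case \(M_0 = M_1 = M\), is also correct and arguably the cleanest route.
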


An immediate question is whether the assumption that \(j\) is an ultrapower embedding is necessary here; that is, if \(M\) is an ultrapower of \(V\), is there a unique {\it elementary} embedding \(j : V\to M\)? The answer to this question is also yes, but the proof is quite a bit harder. This appears in the author's thesis.
\section{Factor ultrafilters and the seed order}
\begin{defn}
Suppose \(U\D W\). Let \(k : M_U\to M_W\) be an internal ultrapower embedding with \(j_W=k\circ j_U\). Then \(W/U\) is the \(M_U\)-ultrafilter derived from \(k\) using \([\text{id}]_W\).
\end{defn}
\(W/U\) is well-defined by \cref{RFUnique}. We are agnostic about what the underlying set of \(W/U\) is, except in the case that \(W\) is a uniform ultrafilter on an ordinal, when we make the convention that {\it the underlying ordinal of \(W/U\) should chosen so that in \(M_U\), \(W/U\) is uniform as well.} That is, \(\textsc{sp}(W/U)\) is the least ordinal \(\delta\) such that \(k(\delta)\geq [\text{id}]_W\) where \(k : M_U\to M_W\) is the unique embedding with \(j_W = k\circ j_U\).

\begin{defn}
Suppose \(\alpha\) is an ordinal. The {\it tail filter} on \(\alpha\) is the filter generated by sets of the form \(\alpha\setminus \beta\) for \(\beta < \alpha\). An ultrafilter on \(\alpha\) is called {\it tail uniform}, or just {\it uniform}, if \(U\) extends the tail filter on \(\alpha\).
\end{defn}

\begin{defn}
The {\it seed order} is defined on countably complete uniform ultrafilters \(U_0\) and \(U_1\) by setting \(U_0\swo U_1\) if there is a comparison \((i_0,i_1) : (M_{U_0},M_{U_1})\to N\) of \((j_{U_0},j_{U_1})\) such that \(i_0([\text{id}]_{U_0}) < i_1([\text{id}]_{U_1})\).
\end{defn}

The following theorem is proved in \cite{UA}:

\begin{thm}[UA]
The seed order wellorders the class of uniform countably complete ultrafilters.\qed
\end{thm}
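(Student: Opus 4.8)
The plan is to prove separately that the seed order $\swo$ is linear and that it is wellfounded, in each case reducing the quantification over comparisons in the definition of $\swo$ to the behaviour of a single conveniently chosen comparison. The engine is that whether $U_0\swo U_1$ is independent of the comparison used to test it: given two comparisons $(i_0,i_1):(M_{U_0},M_{U_1})\to N$ and $(i_0',i_1'):(M_{U_0},M_{U_1})\to N'$ of $(j_{U_0},j_{U_1})$ (these exist by UA, for instance by \cref{CanonicalExistence}), \cref{CompareComparison} supplies internal ultrapower embeddings $(k,k'):(N,N')\to P$ with $k\circ i_\ell=k'\circ i_\ell'$ for $\ell\in\{0,1\}$, and since $k$ and $k'$ are order preserving on the ordinals, $i_0([\mathrm{id}]_{U_0})<i_1([\mathrm{id}]_{U_1})$ if and only if $i_0'([\mathrm{id}]_{U_0})<i_1'([\mathrm{id}]_{U_1})$. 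Moreover, if equality $i_0([\mathrm{id}]_{U_0})=i_1([\mathrm{id}]_{U_1})$ holds in some comparison then $U_0=U_1$, because each $U_\ell$ is the ultrafilter on its underlying ordinal derived from $i_\ell\circ j_{U_\ell}$ using $i_\ell([\mathrm{id}]_{U_\ell})$, the embeddings $i_0\circ j_{U_0}$ and $i_1\circ j_{U_1}$ agree, and the common seed together with tail uniformity pins down the underlying ordinal.

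To prove linearity it suffices to show $\swo$ linearly orders any finite set $\{U_0,\dots,U_n\}$ of uniform countably complete ultrafilters. By iterating \cref{SemiLattice}, fix a countably complete ultrafilter $W$ with $U_m\D W$ for all $m\le n$ and internal ultrapower embeddings $h_m:M_{U_m}\to M_W$ with $h_m\circ j_{U_m}=j_W$. For $m\ne m'$, $(h_m,h_{m'})$ is a comparison of $(j_{U_m},j_{U_{m'}})$, so by the previous paragraph $U_m\swo U_{m'}$ exactly when $h_m([\mathrm{id}]_{U_m})<h_{m'}([\mathrm{id}]_{U_{m'}})$ and $U_m=U_{m'}$ exactly when these ordinals coincide. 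Thus $\swo$ on $\{U_0,\dots,U_n\}$ is isomorphic to the natural order on the ordinals $h_0([\mathrm{id}]_{U_0}),\dots,h_n([\mathrm{id}]_{U_n})$ of the wellfounded model $M_W$; in particular $\swo$ is irreflexive, transitive, and total.

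For wellfoundedness, suppose toward a contradiction that $U_{n+1}\swo U_n$ for all $n<\omega$. Recursively choosing Rudin-Frol\'ik upper bounds via \cref{SemiLattice}, fix countably complete ultrafilters $W_n$ with $W_0$ an upper bound of $U_0,U_1$ and $W_{n+1}$ an upper bound of $W_n,U_{n+2}$; then $W_0\D W_1\D\cdots$ and $U_n,U_{n+1}\D W_n$ for all $n$. Let $\pi_n:M_{W_n}\to M_{W_{n+1}}$ be the internal ultrapower embedding witnessing $W_n\D W_{n+1}$, so that $V\to M_{W_0}\to M_{W_1}\to\cdots$ (first map $j_{W_0}$, the rest the $\pi_n$) is a linear iteration of $V$, each step of which is an ultrapower by a factor ultrafilter that is countably complete in the model to which it belongs. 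By the standard wellfoundedness of countable linear iterations by countably complete ultrafilters, the direct limit $M_\infty$, with limit maps $e_n:M_{W_n}\to M_\infty$, is wellfounded. For each $n$ fix internal ultrapower embeddings $i_0^n:M_{U_n}\to M_{W_n}$ and $i_1^n:M_{U_{n+1}}\to M_{W_n}$ witnessing $U_n\D W_n$ and $U_{n+1}\D W_n$; then $(i_0^n,i_1^n)$ is a comparison of $(j_{U_n},j_{U_{n+1}})$, so since $U_{n+1}\swo U_n$ we get $i_1^n([\mathrm{id}]_{U_{n+1}})<i_0^n([\mathrm{id}]_{U_n})$. Finally $\pi_n\circ i_1^n$ and $i_0^{n+1}$ are both ultrapower embeddings of $M_{U_{n+1}}$ into $M_{W_{n+1}}$ commuting with $j_{U_{n+1}}$, hence equal by the uniqueness argument of \cref{RFUnique} (using \cref{DefinableUnique}). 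Since $e_n=e_{n+1}\circ\pi_n$ and each $e_n$ is order preserving, it follows that
\[
e_{n+1}\bigl(i_0^{n+1}([\mathrm{id}]_{U_{n+1}})\bigr)=e_n\bigl(i_1^n([\mathrm{id}]_{U_{n+1}})\bigr)<e_n\bigl(i_0^n([\mathrm{id}]_{U_n})\bigr),
\]
so $\langle e_n(i_0^n([\mathrm{id}]_{U_n})):n<\omega\rangle$ is a strictly descending $\omega$-sequence of ordinals of $M_\infty$, a contradiction.

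The diagram chasing above should be routine given \cref{CompareComparison}, \cref{CanonicalExistence}, and \cref{SemiLattice}. The step I expect to require genuine care is the wellfoundedness of $M_\infty$, which rests on two facts outside the UA machinery: that a factor ultrafilter $W/U$ of a countably complete $W$ is countably complete as computed in $M_U$, and that a countable linear iteration of a wellfounded model by ultrafilters each countably complete in its own model has a wellfounded direct limit.
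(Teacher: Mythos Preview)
The paper does not prove this theorem: it is stated with a terminal \qed and attributed to \cite{UA}, so there is no in-paper argument to compare your proposal against.

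Your argument is correct and is essentially the standard one. Two minor remarks. First, for linearity you do not need the full strength of \cref{SemiLattice}; bare directedness from UA already yields a common Rudin--Frol\'ik upper bound for finitely many ultrafilters, and any witnessing internal ultrapower embeddings $h_m$ suffice for your ordinal-comparison trick. Second, your identification of the delicate point is accurate: the wellfoundedness of the direct limit $M_\infty$ is precisely the ``standard theorem of Mitchell'' that the paper itself invokes without proof in the argument for \cref{ACC}, and the countable completeness in $M_{W_n}$ of each factor $W_{n+1}/W_n$ is immediate since its ultrapower $M_{W_{n+1}}$ is genuinely wellfounded. There is no circularity in your appeal to \cref{CompareComparison}, \cref{CanonicalExistence}, and \cref{SemiLattice}, as none of their proofs in this paper use the seed order.
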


We need the following variant of \cref{DefinableUnique} which appears as \cite{IR} Theorem 3.11.

\begin{thm}\label{MinDefEmb}
Suppose \(M\) and \(N\) are inner models and \(j_0,j_1 : M \to N\) are elementary embeddings. Assume \(j_0\) is definable from parameters over \(M\). Then \(j_0(\alpha) \leq j_1(\alpha)\) for all ordinals \(\alpha\).\qed
\end{thm}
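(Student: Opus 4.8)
The plan is to run the ``minimal counterexample'' argument behind \cref{DefinableUnique} and \cref{RKFact}. Fix a formula \(\varphi\) and a parameter \(p\in M\) with \(j_0=\{(x,y):M\models\varphi(x,y,p)\}\), so that \(M\) regards \(\varphi(\cdot,\cdot,p)\) as a total function agreeing with \(j_0\); in particular \(M\) sees that \(\varphi(\cdot,\cdot,p)\restriction\text{Ord}\) is strictly increasing, and \(j_0\restriction x\in M\) for all \(x\in M\). Suppose the conclusion fails and let \(\alpha\) be the least ordinal with \(j_0(\alpha)>j_1(\alpha)\). If \(\alpha=\gamma+1\), then \(j_0(\alpha)=j_0(\gamma)+1\) and \(j_1(\alpha)=j_1(\gamma)+1\), because successorhood is absolute for elementary embeddings of transitive models, so the minimality of \(\alpha\) gives \(j_0(\gamma)\le j_1(\gamma)\) and hence \(j_0(\alpha)\le j_1(\alpha)\), a contradiction. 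Thus \(\alpha\) is a limit; and if \(j_0\) is continuous at \(\alpha\) then \(j_0(\alpha)=\sup_{\xi<\alpha}j_0(\xi)\le\sup_{\xi<\alpha}j_1(\xi)\le j_1(\alpha)\), again a contradiction. So \(\alpha\) is a limit ordinal at which \(j_0\) jumps.

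The next step is to pin down \(\alpha\) definably over \(M\). Put \(\beta:=j_1(\alpha)\). For \(\xi<\alpha\), the minimality of \(\alpha\) and the monotonicity of \(j_1\) give \(j_0(\xi)\le j_1(\xi)<\beta\), while \(j_0(\alpha)>\beta\); hence \(\alpha\) is the least ordinal \(\xi\) with \(j_0(\xi)\ge\beta\). So \(M\) satisfies the sentence ``\(\alpha\) is the least \(\xi\) such that the \(\varphi(\cdot,\cdot,p)\)-value at \(\xi\) is at least \(\beta\)'', a sentence with parameters \(\alpha,\beta,p\). Applying \(j_1\) and using \(j_1(\alpha)=\beta\), we get that \(N\) satisfies ``\(\beta\) is the least \(\xi\) such that \(\ell(\xi)\ge j_1(\beta)\)'', where \(\ell\) is the class function computed over \(N\) by \(\varphi(\cdot,\cdot,j_1(p))\); moreover \(\ell\restriction\text{Ord}\) is strictly increasing, and applying \(j_1\) to the true sentences ``\(\varphi(\xi,j_0(\xi),p)\)'' gives \(\ell(j_1(\xi))=j_1(j_0(\xi))\) for every ordinal \(\xi\).

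The main obstacle is to turn this configuration into a contradiction, because the facts listed above are jointly consistent on their own: nothing yet forbids \(j_1(\alpha)\) from sitting in the ``gap'' \([\sup_{\xi<\alpha}j_0(\xi),\,j_0(\alpha))\). Closing that gap requires structural input about \(j_0\) beyond its mere definability. The approach I would pursue is to iterate \(j_0\): definability over \(M\) makes it possible to form the transfinite iteration \(j_0^{(\gamma)}\colon M\to N_\gamma\), whose critical sequence \(\langle\kappa_\gamma\rangle\) is continuous and cofinal in \(\text{Ord}\), with \(j_0^{(\gamma)}(\alpha)\ge j_0(\alpha)\) for every \(\gamma\ge1\); one then analyzes, in Gaifman's style, exactly where \(j_0^{(\gamma)}\) sends \(\alpha\) relative to \(\langle\kappa_\gamma\rangle\), and combines this with the elementarity of \(j_1\colon M\to N\) to show that \(j_1(\alpha)\) cannot lie in that gap. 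Reconciling the canonical iteration of \(j_0\) with the arbitrary embedding \(j_1\) into the same model is, I expect, the technical heart of the proof — and presumably where the argument of \cite{IR} does its real work.
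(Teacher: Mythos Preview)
The paper does not give its own proof of \cref{MinDefEmb}: the theorem is imported from \cite{IR} (Theorem~3.11 there), with only the remark that the argument is similar to the Dodd--Jensen Lemma and to Woodin's uniqueness-of-close-embeddings lemma. So there is no in-paper proof to compare against, only that hint about method.

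Your proposal is not a proof, and to your credit you say so. The minimal-counterexample reduction in your first two paragraphs is correct as far as it goes: the least bad $\alpha$ is indeed a limit ordinal at which $j_0$ is discontinuous, and $\alpha$ is definable over $M$ from $p$ and $\beta=j_1(\alpha)$ as the least $\xi$ with $j_0(\xi)\geq\beta$. But, as you yourself note, pushing this through $j_1$ just relocates the same configuration inside $N$; nothing collapses. The case split on successor/limit/continuity is accurate but ultimately a detour: the real argument does not need it.

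Your closing instinct is the right one and matches the paper's hint. The Dodd--Jensen-style proof iterates $j_0$ (which is possible precisely because $j_0$ is definable over $M$, hence over each iterate) to form a tower $M=M_0\to M_1\to M_2\to\cdots$ with wellfounded direct limit, and then runs a \emph{copying construction}: $j_1$ and its shifts thread through the tower one step ahead of the iteration maps, and the inequality $j_0(\alpha)>j_1(\alpha)$ propagates to yield a strictly descending $\omega$-sequence of ordinals in the direct limit. That contradiction is the whole content; the work is in setting up the copy maps correctly and checking the direct limit is wellfounded. You have identified where the argument lives but have not carried it out, so as a proof the proposal is incomplete.
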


The proof is similar to the proof of the Dodd-Jensen Lemma (see \cite{Steel}) and Woodin's Uniqueness of Close Embeddings Lemma \cite{Woodin}.

Using \cref{MinDefEmb}, we prove a key lemma that leads to the finiteness properties of the Rudin-Frol\'ik order under UA:

\begin{lma}\label{Pushdown}
Suppose \(U\D W\) are nonprincipal countably complete uniform ultrafilters. Then in \(M_U\), \(W/U\swo j_U(W).\)
\begin{proof}
Let \((i_0,i_1) : (M^{M_U}_{W/U}, M^{M_U}_{j_U(W)})\to N\) be a comparison of \((j_{W/U}^{M_U},j_{j_U(W)}^{M_U})\) in \(M_U\). Note that \(M^{M_U}_{W/U} = M_W\) and \(M^{M_U}_{j_U(W)} = j_U(M_W)\), and \((j_U\restriction M_W) : M_W\to j_U(M_W)\) is an elementary embedding. Therefore \(i_0\) and \(i_1\circ (j_U\restriction M_W)\) are elementary embeddings from \(M_W\) to \(N\). Since \(i_0\) is an internal ultrapower embedding, by \cref{MinDefEmb}, \(i_0(\alpha) \leq i_1\circ j_U(\alpha)\) for all ordinals \(\alpha\). In particular, \(i_0([\text{id}]_W) \leq i_1(j_U([\text{id}]_W)) = i_1([\text{id}]^{M_U}_{j_U(W)})\). Therefore \((i_0,i_1)\) witnesses that \(W/U\wo j_U(W)\) in \(M_U\).

We must now show that in fact \(W/U\swo j_U(W)\) in \(M_U\), for which it is enough to show that \(W/U\neq j_U(W)\). If \(W/U = j_U(W)\), however, then \(M_{j_U(W)}^{M_U} = M_W\) and \[j_U\circ j_W = j_{j_U(W)}^{M_U}\circ j_U = j_{W/U}\circ j_U = j_W\] But then \(j_U : M_W\to M_W\) satisfies the hypotheses of \cref{RKFact}, and hence \(j_U\) is the identity. This contradicts our assumption that \(U\) is nonprincipal.
\end{proof}
\end{lma}

The first part of \cref{Pushdown} (showing \(W/U\wo j_U(W)\) in \(M_U\)) is actually part of a much more general phenomenon called the {\it Reciprocity Lemma} (see \cite{SO} Theorem 5.3 and 5.15). The second part (showing \(W/U\neq j_U(W)\)) is just the ultrapower theoretic proof of the well-known fact that \(U\times W\) is not isomorphic to \(W\) if \(U\) is nonprincipal.
\section{The Ultrafilter Factorization Theorem}
In this section, we prove a basic factorization theorem for countably complete ultrafilters.
\begin{defn}
A nonprincipal countably complete ultrafilter \(U\) is {\it irreducible} if for all \(D\D U\), either \(D\) is principal or \(D\) is isomorphic to \(U\).
\end{defn}
The main theorem of this section is that under UA all countably complete ultrafilters factor into irreducibles:
\begin{thm}[UA; Ultrafilter Factorization Theorem]\label{Factorization}
Suppose \(W\) is a countably complete ultrafilter. Then there is a finite linear iterated ultrapower \[V = M_0\stackrel{U_0}\longrightarrow M_1 \stackrel{U_1}{\longrightarrow} \cdots \stackrel{U_{n-1}}\longrightarrow M_n = M_W\]
such that each \(U_i\) is an irreducible ultrafilter of \(M_i\) and \[j_W = (j_{U_{n-1}})^{M_{n-1}}\circ \cdots \circ (j_{U_1})^{M_1}\circ (j_{U_0})^{M_0}\]
\end{thm}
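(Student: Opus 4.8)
The plan is to induct on the position of $W$ in the seed order, using \cref{Pushdown} to bound the ranks that arise in the recursion. Since $M_W$, $j_W$, and the truth of the conclusion depend only on the isomorphism class of $W$, and every ultrafilter is either principal (take $n=0$) or isomorphic to a nonprincipal tail-uniform ultrafilter on an ordinal, I may assume $W$ is a nonprincipal countably complete uniform ultrafilter on an ordinal; under UA the seed order is then a set-like wellorder of the class of such ultrafilters, so I can argue by induction on the seed-order rank $o_s(W)$. If $W$ is irreducible the iteration $V\stackrel{W}\longrightarrow M_W$ of length $1$ works, so assume $W$ is not irreducible and fix a nonprincipal $D\D W$ with $D\not\cong W$, which (replacing $D$ by an isomorphic copy) I may take to be uniform on an ordinal.

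The recursion rests on two estimates. First, $o_s(D)<o_s(W)$: this is an instance of the fact --- provable under UA --- that the Rudin-Frol\'ik order refines the seed order up to isomorphism, since a comparison witnessing $D\D W$ witnesses $D\wo W$ and $D\not\cong W$ makes the inequality strict. (This is the one ingredient not already developed in this section; I would cite it or supply a short direct argument in the style of \cref{Pushdown}.) Second, in $M_D$ the factor $W/D$ is, by our convention, a nonprincipal countably complete uniform ultrafilter on an ordinal --- nonprincipality holds because $W/D$ principal would force the factor embedding $k\colon M_D\to M_W$ to be surjective, hence the identity, hence $j_W=j_D$ and $W\cong D$ (cf.\ \cref{RKFact}) --- and \cref{Pushdown} gives $W/D\swo j_D(W)$ in $M_D$, so that $o_s^{M_D}(W/D) < o_s^{M_D}(j_D(W)) = j_D(o_s(W))$.

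Now I would assemble the factorization. By the first estimate and the induction hypothesis, $D$ factors as
\[ V = N_0 \stackrel{D_0}\longrightarrow N_1 \stackrel{D_1}\longrightarrow \cdots \stackrel{D_{m-1}}\longrightarrow N_m = M_D, \qquad j_D = j^{N_{m-1}}_{D_{m-1}}\circ\cdots\circ j^{N_0}_{D_0}, \]
with each $D_i$ irreducible in $N_i$. For the factor $W/D$, observe that ``every countably complete uniform ultrafilter of seed rank $<o_s(W)$ factors into irreducibles'' is a first-order statement about $V$ in the parameter $o_s(W)$ --- a witnessing iteration is finite, hence coded by a finite sequence of ultrafilters, and the clauses on it (irreducibility at each stage, the recursion defining the successive ultrapowers, and the identity of the final model and embedding) are first-order --- and it is precisely the induction hypothesis. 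Applying $j_D\colon V\to M_D$ and then the second estimate, $M_D$ satisfies that $W/D$ factors into irreducibles; as this property is absolute between $M_D$ and $V$ (the witness is a finite sequence of sets lying in $M_D\subseteq V$, and all of its clauses are computed inside $M_D$), I obtain a genuine factorization
\[ M_D = P_0 \stackrel{E_0}\longrightarrow \cdots \stackrel{E_{\ell-1}}\longrightarrow P_\ell = M^{M_D}_{W/D}, \qquad j^{M_D}_{W/D} = j^{P_{\ell-1}}_{E_{\ell-1}}\circ\cdots\circ j^{P_0}_{E_0}, \]
with each $E_i$ irreducible in $P_i$. Since $M^{M_D}_{W/D}=M_W$ and $j^{M_D}_{W/D}\circ j_D = j_W$, concatenating the two iterations gives a finite linear iterated ultrapower from $V$ to $M_W$ whose successive ultrafilters $D_0,\dots,D_{m-1},E_0,\dots,E_{\ell-1}$ are irreducible in the respective models and whose composite embedding is $j_W$, completing the induction.

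The crux --- and what I expect to be the main obstacle --- is the bookkeeping of the inductive step together with the first estimate: the recursion on the factor $W/D$ does not remain among ultrafilters of $V$ of smaller seed rank but moves into the inner model $M_D$, and this is reconciled only because \cref{Pushdown} pins $o_s^{M_D}(W/D)$ below $j_D(o_s(W))$, exactly the bound to which the induction hypothesis is transported by $j_D$. Establishing that the Rudin-Frol\'ik order refines the seed order, and verifying that ``factors into irreducibles'' is genuinely first-order so that the transfer along $j_D$ is legitimate, are the steps that will require the most care.
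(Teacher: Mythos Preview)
Your approach is genuinely different from the paper's. The paper does not induct on the seed order at all in the proof of \cref{Factorization}; instead it first proves the \emph{local ascending chain condition} for the Rudin--Frol\'ik order (\cref{ACC}), using \cref{Pushdown} together with a direct-limit argument, and then invokes an abstract order-theoretic lemma: any strongly atomic partial order with the local ascending chain condition has finite maximal chains below each element. Strong atomicity comes for free from the wellfoundedness of \(\D\). Your argument and the paper's share the same engine (\cref{Pushdown}), but the paper packages it once into the ACC and then argues purely order-theoretically, whereas you transfer the induction hypothesis through \(j_D\) by elementarity at every step.

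Your proof has a real gap at the first estimate. The claim that ``a comparison witnessing \(D\D W\) witnesses \(D\wo W\)'' is not justified: the comparison \((k,\text{id}):(M_D,M_W)\to M_W\) witnesses \(D\wo W\) only if \(k([\text{id}]_D)\le [\text{id}]_W\), and that inequality depends on the particular seed you chose for \(D\), not just its isomorphism class. Nothing you have said rules out \(k([\text{id}]_D)>[\text{id}]_W\); indeed, within a fixed isomorphism class one can pick representatives with arbitrarily large seeds. So as written, you cannot apply the induction hypothesis to \(D\).

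The fix is easy and actually streamlines your argument: instead of an arbitrary proper divisor, choose \(D\) to be an \emph{irreducible} \(\D\)-predecessor of \(W\), which exists by the wellfoundedness of \(\D\). Then \(D\) factors trivially as a length-one iteration, and only your second estimate is needed; the rest of your argument (transferring the induction hypothesis along \(j_D\) and invoking \cref{Pushdown} to bound \(o_s^{M_D}(W/D)\)) goes through unchanged. With this modification your proof is correct, though still somewhat heavier than the paper's: the elementarity transfer and the first-order coding of ``factors into irreducibles'' are genuine work that the paper's ACC-based approach avoids entirely.
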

We will also show:

\begin{thm}[UA]\label{Lattice} The Rudin-Frol\'ik order induces a lattice structure on the isomorphism types of countably complete ultrafilters.\end{thm}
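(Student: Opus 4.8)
The order $\D$ descends to a partial order on isomorphism types of countably complete ultrafilters — the only nonobvious point, antisymmetry, holds because $U\D W\D U$ forces $U$ and $W$ to be Rudin--Keisler isomorphic (composing the two witnessing embeddings and applying \cref{RKFact} shows $M_U=M_W$ and $j_U=j_W$). So \cref{Lattice} is exactly the assertion that in this partial order every pair of elements has a least upper bound and a greatest lower bound. Least upper bounds are precisely the content of \cref{SemiLattice}, so the whole task is to produce greatest lower bounds.

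Given countably complete ultrafilters $U_0$ and $U_1$, the plan is to work with the collection $\mathcal P$ of isomorphism types of ultrafilters $D$ with $D\D U_0$ and $D\D U_1$, i.e.\ the common $\D$-predecessors of $U_0$ and $U_1$. This collection is nonempty, since any principal ultrafilter lies $\D$-below everything. The key structural observation is that $\mathcal P$ is closed under the binary join supplied by \cref{SemiLattice}: if $D_1,D_2\D U_0$ and $D_1,D_2\D U_1$ and $W$ is the least upper bound of $D_1$ and $D_2$, then $U_0$ is an upper bound of $\{D_1,D_2\}$, so the leastness of $W$ gives $W\D U_0$; likewise $W\D U_1$; hence $W$ again represents a member of $\mathcal P$. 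Thus $(\mathcal P,\D)$ is a nonempty join-subsemilattice of the order on isomorphism types.

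Now one invokes the paper's main theorem: every countably complete ultrafilter has only finitely many $\D$-predecessors up to isomorphism. Since $\mathcal P$ is contained in the set of $\D$-predecessors of $U_0$ together with $U_0$ itself, $\mathcal P$ is finite. A finite nonempty join-semilattice has a greatest element, namely the join of all of its elements (obtained by iterating \cref{SemiLattice}); call it $W_0$. Because $W_0\in\mathcal P$ we have $W_0\D U_0$ and $W_0\D U_1$, and because every common $\D$-predecessor of $U_0$ and $U_1$ lies in $\mathcal P$ and is therefore $\D$-below $W_0$, the ultrafilter $W_0$ is the greatest lower bound of $U_0$ and $U_1$. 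Combined with \cref{SemiLattice}, this yields that $\D$ induces a lattice structure on isomorphism types.

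The one genuinely hard ingredient is the appeal to finiteness of predecessors; granting it, everything else is soft order theory. Without it one would have to rule out infinite $\D$-ascending chains bounded above by $U_0$ directly. Using the equivalence $U\D W\iff M_W\subseteq M_U$ from \cref{DivisionInclusion}, the members of $\mathcal P$ correspond exactly to the ultrapowers of $V$ containing $M_{U_0}\cup M_{U_1}$, and the desired greatest lower bound corresponds to the $\subseteq$-least such ultrapower; the family of all of them is $\subseteq$-downward directed (closed under pairwise $\D$-join by the argument above), so one would want the intersection of such a family to again be an ultrapower of $V$ — a statement I expect to be no easier than the finiteness theorem itself. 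For that reason I anticipate \cref{Lattice} is proved only after, and by means of, the main theorem.
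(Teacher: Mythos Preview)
Your argument is correct, but it takes a different route from the paper, and your closing prediction about the order of dependencies is backwards. The paper proves \cref{Lattice} \emph{before} the finiteness theorem, using only \cref{SemiLattice} together with the local ascending chain condition (\cref{ACC}), packaged abstractly as \cref{AbstractLattice}: your set $\mathcal P$ of common lower bounds is directed (closed under join, exactly as you argue), it is bounded above by $U_0$, and hence by \cref{ACC} it satisfies the ascending chain condition; a directed poset with the ascending chain condition has a maximum element, and that is the greatest lower bound. Your version instead invokes \cref{RFFinite1} to conclude that $\mathcal P$ is outright finite --- a strictly stronger statement whose proof in the paper already relies on the local ascending chain condition internally (see the corollary preceding \cref{SChar}). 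So the paper's approach is logically prior and lighter; yours is quicker to state once the main theorem is available, but it inverts the true dependency. In particular, the alternative you dismiss in your last paragraph --- ruling out infinite $\D$-ascending chains bounded above by $U_0$ directly --- is precisely what \cref{ACC} does, via a direct limit argument and the wellfoundedness of the seed order, and this is genuinely easier than the full finiteness theorem rather than ``no easier.''
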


These facts come down to the stronger {\it Local Ascending Chain Condition}:
\begin{prp}[UA]\label{ACC}
Suppose \(W\) is a countably complete ultrafilter. Suppose \(W_0\D W_1 \D W_2\D\cdots\) and for all \(n < \omega\), \(W_n\D W\). Then for all sufficiently large \(n < \omega\), \(W_n\) is isomorphic to \(W_{n+1}\).
\begin{proof}
Assume towards a contradiction that the proposition fails. Without loss of generality we may assume that \(W_n\) is not isomorphic to \(W_{n+1}\) for any \(n < \omega\). It follows that for all \(n < \omega\), \(W_{n+1}/W_n\) is nonprincipal in \(M_{W_n}\).

Let \(M_n = M_{W_n}\) and let \(U_{n} = W_{n+1}/W_n\). Thus we have the iterated ultrapower \begin{equation}\label{Iteration}M_0\stackrel{U_0}{\longrightarrow} M_1 \stackrel{U_1}{\longrightarrow} M_2 \stackrel{U_2}{\longrightarrow} M_3 \stackrel{U_3}{\longrightarrow}\cdots\end{equation}

Let \(Z_n = \nicefrac{W}{W_n}\). Then \[Z_{n+1} = \nicefrac{W}{W_{n+1}} = \left(\frac{\nicefrac{W}{W_n}}{\nicefrac{W_{n+1}}{W_n}}\right)^{M_{W_n}} = (\nicefrac{Z_n}{U_n})^{M_{W_n}}\]
The second equality requires an easy formal justification that we omit. 

Let \(M_\omega\) be the direct limit of the iterated ultrapower \cref{Iteration} and let \(j_{n,\omega} : M_n\to M_\omega\) be the direct limit embedding. Then \(M_\omega\) is wellfounded by a standard theorem of Mitchell, and therefore the seed order of \(M_\omega\) is wellfounded.  Let \(Z_n^* = j_{n,\omega}(Z_n)\). It follows from \cref{Pushdown} that \(Z_{n+1}\swo j_{U_n}^{M_n}(Z_n)\) in \(M_{n+1}\). Therefore \(M_\omega\) satisfies \[Z_{n+1}^* = j_{n+1,\omega}(Z_{n+1}) \swo j_{n+1,\omega}(j_{U_n}^{M_n}(Z_n)) = j_{n,\omega}(Z_n) = Z_n^*\]
Therefore in \(M_\omega\), \(Z_0^*\slwo Z_1^*\slwo Z_2^*\slwo\cdots\), contradicting the wellfoundedness of the seed order of \(M_\omega\).
\end{proof}
\end{prp}

For ease of notation, it is somewhat easier to prove the Ultrafilter Factorization Theorem in a somewhat more abstract setting.

\begin{defn}
A partial order \((P,\leq)\) satisfies the {\it local ascending chain condition} if for any \(p\in P\), for any sequence \(p_0\leq p_1\leq\cdots\) such that \(p_n\leq p\) for all \(n < \omega\), for all sufficiently large \(n < \omega\), \(p_n = p_{n+1}\). 
\end{defn}

\begin{defn}
A partial order \((P,\leq)\) is {\it strongly atomic} if for all \(p,q\in P\) with \(p < q\), there is some \(r\) with \(p < r \leq q\) such that the interval \((p,r)\) is empty.
\end{defn}

\begin{lma}[UA]\label{RFProps}
Let \((P,\leq)\) be the class of isomorphism types of countably complete ultrafilters with the partial order induced by the Rudin-Frol\'ik order. Then
\begin{enumerate}[(1)]
\item \((P,\leq)\) is strongly atomic.
\item Assuming \textnormal{UA,} \((P,\leq)\) satisfies the local ascending chain condition.
\end{enumerate}
\begin{proof}
(1) follows from the wellfoundedness of the Rudin-Frol\'ik order. (2) is immediate from \cref{ACC}.
\end{proof}
\end{lma}

\begin{lma}\label{AbstractFactor}
Suppose \((P,\leq)\) is a strongly atomic partial order satisfying the local ascending chain condition. Then for any \(p,q\in P\) with \(p < q\), there is a sequence \(p = p_0< p_1 <\cdots < p_n = q\) such that for all \(i < n\), the interval \((p_i,p_{i+1})\) is empty.
\begin{proof}
One constructs such a sequence \(\langle p_i : i < n\rangle\) by recursion. Let \(p_0 = p\). Suppose \(p_i\) has been defined and \(p_i < q\). Then since \((P,\leq)\) is strongly atomic, we may choose \(p_{i+1}\in P\) such that \(p_i < p_{i+1}\leq q\) and the interval \((p_i,p_{i+1})\) is empty. If \(p_{i+1} = q\), the process terminates, and letting \(n = i+1\), the sequence \(p = p_0< p_1 <\cdots < p_n = q\) is as desired. Otherwise the process continues. 

Assume towards a contradiction that the process never terminates. Then one obtains \(\langle p_i : i < \omega\rangle\) with \(p_0 < p_1 < p_2< \cdots\) and \(p_i \leq q\) for all \(i < \omega\). This contradicts the fact that \((P,\leq)\) has the local ascending chain condition.
\end{proof}
\end{lma}

As a special case of \cref{AbstractFactor}, we have \cref{Factorization}:

\begin{proof}[Proof of \cref{Factorization}]
This is immediate from \cref{RFProps} and \cref{AbstractFactor}.
\end{proof}

We take a similar abstract approach to proving \cref{Lattice}.
\begin{lma}\label{AbstractLattice}
Suppose \((P,\leq)\) is an upper semilattice satisfying the local ascending chain condition. Then \((P,\leq)\) is a lattice.
\begin{proof}
In fact let \(S\subseteq P\) be any nonempty set. We claim \(S\) has a greatest lower bound in \((P,\leq)\). Let \(B = \{p\in P : \forall q\in S\ p\leq q\}\) be the collection of lower bounds of \(S\). Then for any \(p,p'\in B\), the least upper bound \(p\vee p'\) of \(p\) and \(p'\) belongs to \(B\): for any \(q\in S\), \(q\) is an upper bound of \(p\) and \(p'\), so \(p\vee p' \leq q\); hence \(p\vee p'\) is a lower bound of \(S\). In particular, \((B,\leq)\) is directed. 

Since \(S\) is nonempty, \(B\) is bounded (by any element of \(S\)). Therefore since \((P,\leq)\) has the local ascending chain condition, \((B,\leq)\) has the ascending chain condition. It follows that \((B,\leq)\) has a maximal element. But since \((B,\leq)\) is directed, any maximal element of \((B,\leq)\) is in fact the {\it maximum} element. In other words, \(S\) has a greatest lower bound.
\end{proof}
\end{lma}

\begin{proof}[Proof of \cref{Lattice}]
This is an immediate consequence of \cref{RFProps}, \cref{SemiLattice}, and \cref{AbstractLattice}.
\end{proof}
\section{\(\D\) is locally finite}
The main theorem of this section is the following:
\begin{thm}[UA]\label{RFFinite1}
A countably complete ultrafilter \(U\) has at most finitely many predecessors in the Rudin-Frol\'ik order up to isomorphism.
\end{thm}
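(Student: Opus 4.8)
The plan is to recast the theorem in terms of inner models, reduce it via the structural results already established to a statement about antichains, and then close the gap with a seed‑order descent argument in the spirit of \cref{ACC}.

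\textbf{Step 1 (translation).} By \cref{DivisionInclusion} together with the uniqueness of ultrapower embeddings into a fixed inner model (the corollary following \cref{DivisionInclusion}), the predecessors of \(W\) in the Rudin‑Frol\'ik order, taken up to isomorphism, correspond bijectively to the ultrapowers \(M\) of \(V\) with \(M_W\subseteq M\), so it suffices to bound the number of such \(M\). Ordered by \(\D\), the set \(I_W\) of predecessors of \(W\) is a lattice with greatest element \(W\) (by \cref{SemiLattice} and \cref{Lattice}) that satisfies the local ascending chain condition (by \cref{RFProps}); and since \(\D\) is wellfounded, \(I_W\) has neither an infinite ascending nor an infinite descending chain. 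By the classical dichotomy that an infinite partial order contains an infinite chain or an infinite antichain, \(I_W\) is finite as soon as it contains no infinite antichain.

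\textbf{Step 2 (reduction to finitely many irreducible predecessors).} Distinct nonprincipal irreducible ultrafilters are \(\D\)-incomparable, straight from the definition of irreducibility, so a special case of the theorem is that \(W\) has only finitely many irreducible predecessors, i.e. \(I_W\) has finitely many atoms. I would prove the theorem by induction on the number of irreducible factors of \(W\) supplied by \cref{Factorization} (having first checked that this number is a genuine invariant — see Step 3). In the inductive step, for an irreducible \(U_0\D W\) the interval \([U_0,W]\) of \(I_W\) corresponds bijectively and order‑preservingly to the predecessors of the factor ultrafilter \(W/U_0\) inside \(M_{U_0}\), a model of UA in which \(W/U_0\) has strictly fewer irreducible factors; so \([U_0,W]\) is finite by the induction hypothesis, and for each \(E\in[U_0,W]\) with \(E\neq W\) the smaller interval \([0,E]\) is likewise finite. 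Since \(\{D\in I_W : D\vee U_0 = E\}\subseteq[0,E]\) and since every nonprincipal proper predecessor \(D\) of \(W\) has an irreducible predecessor — whence \(D\vee U_0 = D\neq W\) for that choice of \(U_0\) — ranging over the (finitely many) irreducible predecessors \(U_0\) displays \(I_W\) as a finite union of finite sets.

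\textbf{Step 3 (finitely many irreducibles, via primality).} It remains to bound the number of irreducible predecessors of \(W\). The natural tool is the primality of irreducibles for the Rudin‑Frol\'ik join: if \(I\) is irreducible and \(I\D A\vee B\), then \(I\D A\) or \(I\D B\). Granting this, an infinite set \(I_0,I_1,\dots\) of pairwise non‑isomorphic irreducible predecessors of \(W\) would have partial joins \(I_0\vee\cdots\vee I_k\) forming an ascending chain below \(W\); this chain stabilizes by the local ascending chain condition, yielding some \(k\) with \(I_{k+1}\D I_0\vee\cdots\vee I_k\) while \(I_{k+1}\not\D I_j\) for every \(j\le k\) (they form an antichain), contradicting primality. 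The same primality statement should also be what pins down that ``number of irreducible factors'' is well‑defined and additive along \(\D\), validating the induction of Step 2.

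\textbf{The main obstacle.} The crux is the primality of irreducibles, and I expect it to demand a Pushdown/seed‑order argument in the style of \cref{ACC}: starting from \(I\D A\vee B\) with \(I\) irreducible but \(I\not\D A\) and \(I\not\D B\), one builds an iterated ultrapower out of the relevant factor ultrafilters — for instance \((I\vee A)/A\), which is nonprincipal in \(M_A\) precisely because \(I\not\D A\) — passes to its direct limit, wellfounded by Mitchell's theorem, and uses \cref{Pushdown} to extract an infinite \(\swo\)-descending sequence in the seed order of that limit, contradicting its wellfoundedness. The real difficulty lies in arranging the diagram so that all the relevant factor ultrafilters are comparable inside a single wellfounded model and that each comparison produces a \emph{strict} drop in the seed order; the lattice‑theoretic manipulations above are bookkeeping around this core.
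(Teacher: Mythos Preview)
Your strategy is quite different from the paper's, and the gap you yourself flag as ``the main obstacle'' is genuine and, as far as I can see, not closed by the sketch you give.

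The paper's argument never touches irreducible factorizations at all. Instead it works with the \emph{Dodd parameter} \(p\) of the target embedding \(\ell : V\to N\). For each \(m<\mathrm{lth}(p)\) one forms the set \(S_m(\ell)\) of predecessors \(j\D\ell\) whose factor embedding \(k\) satisfies \(k[M]\subseteq H^N(\ell[V]\cup\{p\restriction m\}\cup p_m)\). The Dodd parameter calculus shows every proper predecessor lies in some \(S_m(\ell)\) and that \(\ell\notin S_m(\ell)\); the key point is that each \(S_m(\ell)\) is downward closed and closed under canonical comparisons, hence by the local ascending chain condition has a \(\D\)-maximum \(j_m\sD\ell\). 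Induction on \(\sD\) then makes each \(S_m(\ell)=\{j:j\D j_m\}\) finite, and there are only \(\mathrm{lth}(p)\) of them. No primality, no Jordan--H\"older, no transfer to \(M_{U_0}\) is needed.

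By contrast, your entire induction rests on the claim that an irreducible \(I\) with \(I\D A\vee B\) must satisfy \(I\D A\) or \(I\D B\). This is equivalent to distributivity of the interval lattice below \(W\), and nothing in the paper's toolkit (Pushdown, the seed order, ACC) gives it directly. Your proposed descent argument is not really specified: from \(I\not\D A\) one gets that \((I\vee A)/A\) is nonprincipal in \(M_A\), but there is no evident way to iterate this into a single wellfounded direct limit carrying a strictly \(\swo\)-descending sequence, because the models \(M_A\) and \(M_B\) sit side by side rather than in a linear iteration, and the relevant factor ultrafilters live in different models with no a priori comparison. In the absence of a concrete diagram I do not see how the ACC-style contradiction is to be produced. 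Moreover, even granting primality, the further claim that the number of irreducible factors is a well-defined additive invariant (your Jordan--H\"older step) is another lattice-theoretic fact that does not follow formally from primality alone and would itself require an argument; without it your cross-model induction in Step~2 has no decreasing parameter. So there are really two unproved lemmas stacked here, and the Dodd-parameter decomposition in the paper sidesteps both.
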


For the proof, it is more convenient to work not with countably complete ultrafilters up to isomorphism but instead with ultrapower embeddings. We therefore make the following definition:

\begin{defn}
Suppose \(j: V\to M\) and \(j' : V \to M'\) are ultrapower embeddings. The Rudin-Frol\'ik order \(\D\) is defined by setting \(j\D j'\) if and only if there is an internal ultrapower embedding \(k : M\to M'\) such that \(j' = k\circ j\).
\end{defn}

As in the definition above, we will be a little fast and loose in our dealings with relations on proper classes, but everything we do is formalizable in ZFC. \cref{RFFinite1} is an immediate consequence of the following theorem:

\begin{thm}[UA]\label{RFFinite2}
An ultrapower embedding has at most finitely many Rudin-Frol\'ik predecessors.
\end{thm}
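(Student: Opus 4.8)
The plan is to reduce the theorem to the finiteness of the set of \emph{immediate} Rudin-Frol\'ik predecessors, and then to bound those using \cref{Pushdown} together with the wellfoundedness of the seed order.

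For an ultrapower embedding $j_W : V\to M_W$ let $P(j_W)$ be the class of isomorphism types of ultrapower embeddings $j$ with $j\D j_W$. By \cref{RFProps} the Rudin-Frol\'ik order is wellfounded and, under UA, satisfies the local ascending chain condition; since every member of $P(j_W)$ is $\D j_W$, every ascending chain in $P(j_W)$ is bounded and hence eventually constant. I would prove by induction on the Rudin-Frol\'ik rank of $j_W$ that $P(j_W)$ is finite. The rank-zero case is trivial. For the inductive step, observe that the set of proper predecessors of $j_W$, being bounded by $j_W$ and satisfying the ascending chain condition, has maximal elements and that every proper predecessor lies $\D$-below some maximal one — an \emph{immediate predecessor} of $j_W$. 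Hence $P(j_W)=\{j_W\}\cup\bigcup_D P(j_D)$, where $D$ ranges over the immediate predecessors of $j_W$, and since each such $j_D$ has strictly smaller rank, each $P(j_D)$ is finite by the induction hypothesis. So it suffices to show that $j_W$ has only finitely many immediate predecessors. (By \cref{DivisionInclusion}, $j_D\D j_W$ iff $M_W\subseteq M_D$, so equivalently: only finitely many ultrapowers of $V$ are minimal among ultrapowers of $V$ properly containing $M_W$.)

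Suppose toward a contradiction that $j_W$ has infinitely many immediate predecessors. Replacing $W$ by an isomorphic ultrafilter we may assume it is a uniform countably complete ultrafilter on an ordinal and that the immediate predecessors are represented by pairwise non-isomorphic uniform ultrafilters $D_n$, $n<\omega$. Distinct immediate predecessors are $\D$-incomparable, and since each $D_m$ is a maximal proper predecessor, $D_m\vee D_n=W$ whenever $m\neq n$; by the proof of \cref{CanonicalExistence} (cf.\ \cref{SemiLattice}) the canonical comparison of $(j_{D_m},j_{D_n})$ has target $M_W$ with legs the Rudin-Frol\'ik witnesses $k_{D_m},k_{D_n}$, so $M_W=H^{M_W}(k_{D_m}[M_{D_m}]\cup k_{D_n}[M_{D_n}])$. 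Since $D_n$ is immediate, $W/D_n$ is an irreducible nonprincipal $M_{D_n}$-ultrafilter, so \cref{Pushdown} gives $W/D_n\swo j_{D_n}(W)$ in $M_{D_n}$; applying $k_{D_n}$ and using $k_{D_n}\circ j_{D_n}=j_W$, the ultrafilters $X_n:=k_{D_n}(W/D_n)$ are irreducible and nonprincipal in $M_W$ with $X_n\swo j_W(W)$ in the seed order of $M_W$. A uniqueness argument — recovering $M_{D_n}$ and $k_{D_n}$ from $M_W$ and $X_n$ via \cref{RFUnique} and the uniqueness of ultrapower embeddings — shows $n\mapsto X_n$ is injective on isomorphism types, so $M_W$ contains infinitely many irreducible ultrafilters strictly below $j_W(W)$ in its seed order.

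The heart of the argument, and the step I expect to be the main obstacle, is turning this into a contradiction with the wellfoundedness of the seed order. The model to work in is the direct limit of an iterated ultrapower extracted from the $D_n$, exactly as in the proof of \cref{ACC}; the difficulty is that the $D_n$ are pairwise incomparable and so do not themselves form an iteration. My plan would be: first, use \cref{ACC} (the partial joins $D_0\vee\cdots\vee D_n$ are bounded by $W$ and hence stabilize, and the limit is a predecessor of $W$ with infinitely many immediate predecessors, hence $W$ itself by the induction hypothesis) to arrange $W=D_0\vee\cdots\vee D_N$ for some finite $N$; next, for each $i\leq N$, since $D_n\wedge D_i$ is a proper predecessor of $D_i$ and $P(j_{D_i})$ is finite by the induction hypothesis, pass to an infinite set of indices on which all the meets $D_n\wedge D_i$ are constant; finally, use the minimality $M_W=H^{M_W}(k_{D_n}[M_{D_n}]\cup k_{D_i}[M_{D_i}])$ of the pairwise canonical comparisons together with \cref{Pushdown} applied inside the successive ultrapowers to linearize the images $X_n$ into an infinite strictly decreasing sequence in the seed order of a single wellfounded model — the same mechanism by which the proof of \cref{ACC} converts a strictly ascending Rudin-Frol\'ik chain into a descending seed-order chain. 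A cleaner but apparently stronger route would be to first establish that the Rudin-Frol\'ik order is distributive under UA, whereupon the finiteness of the immediate predecessors is a short combinatorial consequence: after stabilizing partial joins as above, a join-irreducible predecessor of $W$ would be forced below one of finitely many others, contradicting the induction hypothesis.
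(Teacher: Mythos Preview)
Your reduction to bounding the immediate predecessors is correct and matches the inductive shape the paper uses. The gap is that you never actually prove that bound. You produce infinitely many $X_n \swo j_W(W)$ in $M_W$, but since the seed order is a wellorder this is no contradiction whatsoever; you acknowledge this and propose two escape routes, neither of which you carry out. The distributivity route would indeed finish cleanly (your stabilization $W = D_0\vee\cdots\vee D_N$ is correct, and with distributivity one would get $D_n = (D_n\wedge D_0)\vee\cdots\vee(D_n\wedge D_N)$, constant on an infinite set of $n$ by the inductive hypothesis), but distributivity of the Rudin-Frol\'ik lattice is not established anywhere in the paper and is not obviously available. The ``linearization'' route is too vague to assess: the mechanism of \cref{ACC} genuinely needs an $\D$-chain to manufacture an iterated ultrapower and push the seed-order inequalities into a single direct limit, and pairwise $\D$-incomparable immediate predecessors do not readily supply one. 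As written, then, the proposal is an outline with its main lemma missing.

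The paper sidesteps this obstacle with a different decomposition. Rather than immediate predecessors, it uses the Dodd parameter $p$ of $\ell$: for each $m < \text{lth}(p)$ let $S_m(\ell)$ be the set of $j \D \ell$ whose witnessing embedding $k : M\to N$ has range contained in $H^N(\ell[V]\cup\{p\restriction m\}\cup p_m)$. Every proper predecessor lies in some $S_m(\ell)$, because $k$ applied to its Dodd parameter is strictly below $p$ in the parameter order (\cref{RKLemma}). The point is that each $S_m(\ell)$ is downward closed and, under UA, closed under canonical comparisons; by the local ascending chain condition it therefore has an $\D$-maximum $j_m \sD \ell$. Hence the proper predecessors of $\ell$ are covered by the $\text{lth}(p)$ many principal ideals $\{j : j\D j_m\}$, each finite by the inductive hypothesis. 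This yields your immediate-predecessor bound as a byproduct (every immediate predecessor must equal some $j_m$), but it does so by exhibiting the finite cover directly rather than by a contradiction argument.
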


\begin{proof}[Proof of \cref{RFFinite1} given \cref{RFFinite2}]
To see that the set of Rudin-Keisler equivalence classes of Rudin-Frol\'ik predecessors of \(U\) is finite, note that the map \(D\mapsto j_D\) passes to a bijection from Rudin-Keisler equivalence classes of countably complete ultrafilters to ultrapower embeddings that maps the collection of Rudin-Frol\'ik predecessors of \(U\) into the set of Rudin-Frol\'ik predecessors of \(j_U\), which is finite by \cref{RFFinite2}.
\end{proof}

\subsection{Dodd parameters}
For the proof of \cref{RFFinite2}, we need some elementary facts about elementary embeddings.

\begin{defn}
A {\it parameter} is a decreasing sequence of ordinals. The {\it parameter order} is the lexicographic order on parameters.
\end{defn}

We often identify parameters with their ranges, which gives a correspondence between parameters and finite sequences of ordinals. Thus if \(p\) and \(q\) are parameters, we will denote by \(p\cup q\) the unique parameter whose range is the union of the ranges of \(p\) and \(q\).

The following standard fact is often quite useful for coding arguments:
\begin{lma}
The parameter order is a wellorder.\qed
\end{lma}

\begin{defn}
Suppose  \(j : V\to M\) is an ultrapower embedding. The {\it Dodd parameter} of \(j\) is the least parameter \(p\) such that \(M = H^M(j[V]\cup \{p\})\).
\end{defn}

Dodd parameters can be defined in greater generality, but here we will only consider the parameters of ultrapowers of \(V\). A useful fact about Dodd parameters is that their minimality persists to elementary extensions:

\begin{lma}\label{RKLemma}
Suppose \(j : V\to M\) and \(k: M \to N\) are ultrapower embeddings. Let \(\ell = k\circ j\). Let \(q\) be the Dodd parameter of \(j\) and \(p\) be the Dodd parameter of \(\ell\). Then \(k(q) < p\).
\begin{proof}
Suppose towards a contradiction that \(p\leq k(q)\). If \(p = k(q)\), then since \(N = H^N(\ell[V]\cup \{p\})\) and \(\ell[V]\cup \{p\}\subseteq k[M]\), we have \(N = k[M]\). Hence \(k\) is surjective, contradicting that \(k\) is a nontrivial elementary embedding. Thus \(p< k(q)\). Since \(N = H^N(\ell[V]\cup \{p\})\), there is some function \(f\) such that \(k(q) = \ell(f)(p)\). Therefore \(N\) satisfies that there is a parameter \(p' < k(q)\) with the property that \(k(q) = k(j(f))(p')\). By elementarity, \(M\) satisfies that there is a parameter \(p' < q\) with the property that \(q = j(f)(p')\). Fixing such a parameter \(p' < q\), \(q\in H^M(j[V]\cup \{p'\})\). Hence \(M = H^M(j[V]\cup q)\subseteq H^M(j[V]\cup \{p'\})\). Thus \(H^M(j[V]\cup \{p'\}) = M\), and since \(p' < q\), this contradicts the minimality of \(q\).
\end{proof}
\end{lma}

Incidentally, this is part of Solovay's original proof of the wellfoundedness of the Rudin-Keisler order (see \cite{KanamoriUltrafilters}): note that if \(j \sRK \ell\), then the Dodd parameter of \(j\) is strictly below that of \(\ell\).

We define a relativized notion of generator that is often useful in the analysis of elementary embeddings.

\begin{defn}
Suppose \(M\) and \(N\) are transitive models of ZFC, \(j : M\to N\) is a cofinal elementary embedding, and \(x\in N\). An ordinal \(\xi\) of \(N\) is an \(x\)-generator of \(j\) if \(\xi\notin H^N(j[M]\cup \{x\}\cup \xi)\).
\end{defn}

The following lemma yields a recursive definition of the Dodd parameter of an elementary embedding.

\begin{lma}\label{DoddGenerator}
Suppose \(j : V\to M\) is an ultrapower embedding. Let \(p\) be the Dodd parameter of \(j\). For all \(m < \textnormal{lth}(p)\), \(p_m\) is the largest \((p\restriction m)\)-generator of \(j\).
\begin{proof}
Since \(M = H^M(j[V]\cup \{p\}) \subseteq H^M(j[V]\cup \{p\restriction m\}\cup (p_m +1))\), there are no \((p\restriction m)\)-generators of \(j\) above \(p_m\). 

To finish, it suffices to show that \(p_m\) is a \((p\restriction m)\)-generator of \(j\). Suppose towards a contradiction that it is not, and let \(q\subseteq p_m\) be a parameter such that \(p_m\in H^{M}(j[V]\cup \{(p\restriction m)\cup q\})\). Letting \(p' = p\setminus \{p_m\}\cup q\), we have \(p' < p\) but \(p\in H^M(j[V]\cup \{p'\})\), and hence \(M = H^M(j[V]\cup \{p'\})\). This contradicts the minimality of \(p\).
\end{proof}
\end{lma} 

\subsection{Finiteness of \(\D\)}
Given these facts, we turn to the proof of \cref{RFFinite2}.

\begin{defn}
Suppose \(\ell : V \to N\) is an ultrapower embedding and \(p\) is its Dodd parameter. For \(m < \text{lth}(p)\), let \(S_m(\ell)\) denote the set of ultrapower embeddings \(j : V\to M\) such that there is an internal ultrapower embedding \(k :  M \to N\) with \(\ell = k\circ j\) and \(k[ M]\subseteq H^N(\ell [V]\cup \{p\restriction m\}\cup p_m)\).
\end{defn}

By definition, \(S_m(\ell)\) is contained in the set of Rudin-Frol\'ik predecessors of \(\ell\). By \cref{RFUnique}, the internal ultrapower embedding \(k : M \to N\) is in fact uniquely determined by the requirement \(\ell = k\circ j\).

The point of the sets \(S_m(\ell)\) is to decompose the collection of proper divisors of \(\ell\) into finitely many pieces:
\begin{lma}\label{Decomposition}
Suppose \(\ell : V \to N\) is an ultrapower embedding and \(p\) is the Dodd parameter of \(\ell\). For any \(j\sD \ell\), \(j\in S_m(\ell)\) for some \(m < \textnormal{lth}(p)\).
\begin{proof}
Suppose \(j : V\to M\) and \(q\) is the Dodd parameter of \(j\). Let \(k : M\to N\) bean internal ultrapower embedding with \(\ell = k\circ j\). Then \(k(q) < p\) in the parameter order.
\end{proof}
\end{lma}

\begin{lma}\label{SStrict}
Suppose \(\ell : V \to N\) is an ultrapower embedding and \(p\) is the Dodd parameter of \(\ell\). For all \(m < \textnormal{lth}(p)\), \(\ell\notin S_m(\ell)\).
\begin{proof}
Suppose towards a contradiction that \(\ell\in S_m(\ell)\). By \cref{RFUnique} (or by the Kunen inconsistency theorem), the unique internal ultrapower embedding \(k : N\to N\) is the identity. Therefore if \(k[N]\subseteq H^N(\ell[V]\cup \{p\restriction m\})\), then \(N = H^N(\ell[V]\cup \{p\restriction m\}\cup p_m)\). But then \(p_m\in H^N(\ell[V]\cup \{p\restriction m\}\cup p_m)\), contradicting \cref{DoddGenerator}.
\end{proof}
\end{lma}

The following easy lemma is not really necessary for the proof of \cref{RFFinite2}, but it will clarify what is going on in \cref{SChar}.

\begin{lma}\label{SClosure}
Suppose \(\ell : V \to N\) is an ultrapower embedding and \(p\) is the Dodd parameter of \(\ell\). For all \(m < \textnormal{lth}(p)\), \(S_m(\ell)\) is closed downwards under \(\D\).
\begin{proof}
Suppose \(j : V\to M\) and \(j' : V\to M'\) are ultrapower embeddings with \(j'\in S_m(\ell)\) and \(j\D j'\). We must show \(j\in S_m(\ell)\).

Fix \(h : M\to M'\) such that \(j' = h\circ j\) and fix \(k' : M'\to N\) such that \(\ell = k'\circ j'\) and \(k'[M']\subseteq H^N(\ell[V]\cup \{p\restriction m\}\cup p_m)\). Let \(k = k'\circ h\). Then \(k: M\to N\) is an internal ultrapower embedding, \(\ell = k\circ j\), and \(k[M]\subseteq k'[M']\subseteq H^N(\ell[V]\cup \{p\restriction m\}\cup p_m)\). Thus \(k\) witnesses that \(j\in S_m(\ell)\), as desired.
\end{proof}
\end{lma}

The key to the proof is the following fact:

\begin{lma}[UA]\label{SDirected}
Suppose \(\ell : V \to N\) is an ultrapower embedding and \(p\) is the Dodd parameter of \(\ell\).  For all \(m < \textnormal{lth}(p)\), \(S_m(\ell)\) is closed under canonical comparisons:  if \(j_0: V\to  M_0\) and \(j_1: V \to  M_1\) belong to \(S_m(\ell)\) and \((i_0,i_1) : ( M_0, M_1)\to M\) is their canonical comparison, then \(i_0\circ j_0\in S_m(\ell)\).
\begin{proof}
Let \(k_0 : M_0\to N\) be the unique ultrapower embedding such that \(\ell = k_0 \circ j_0\). Let \(k_1 : M_1\to N\) be the unique ultrapower embedding such that \(\ell = k_1 \circ j_1\). Then \((k_0,k_1) : (M_0,M_1)\to N\) is a comparison of \((j_0,j_1)\). Therefore by \cref{Pushout}, there is an internal ultrapower embedding \(k : M\to N\) such that \(k_0 = k\circ i_0\) and \(k_1 = k\circ i_1\). By the minimality of canonical comparisons (\cref{CanonicalExistence}), \(M= H^M(i_0[M_0]\cup i_1[M_1])\). Therefore \(k[ M ] = k[H^M(i_0[M_0]\cup i_1[M_1])] = H^N(k_0[M_0]\cup k_1[M_1])\). Since \(j_0,j_1\in S_m(\ell)\), \(k_0[M_0]\cup k_1[M_1]\subseteq H^N(\ell[V]\cup \{p\restriction m\}\cup p_m)\). Therefore \(H^N(k_0[M_0]\cup k_1[M_1])\subseteq H^N(\ell[V]\cup \{p\restriction m\}\cup p_m)\). In other words, \(k[M]\subseteq H^N(\ell[V]\cup \{p\restriction m\}\cup p_m)\). Since we also have \(k\circ (i_0\circ j_0) = k_0\circ j_0 = \ell\), the embedding \(k : M \to N\) witnesses that \(i_0\circ j_0\in S_m(\ell)\), as desired.
\end{proof}
\end{lma}

\begin{cor}[UA]
Suppose \(\ell : V \to N\) is an ultrapower embedding and \(p\) is the Dodd parameter of \(\ell\). For all \(m < \textnormal{lth}(p)\), \(S_m(\ell)\) has an \(\D\)-maximum element.
\begin{proof}
By the local ascending chain condition, \(S_m(\ell)\) has an \(\D\)-maximal element, but by the closure of \(S_m(\ell)\) under canonical comparisons (\cref{SDirected}), this must be an \(\D\)-maximum element.
\end{proof}
\end{cor}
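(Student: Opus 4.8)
The plan is to realize the \(\D\)-maximum of \(S_m(\ell)\) as a \(\D\)-maximal element of a directed set, using the principle that in a directed partial order every maximal element is automatically the maximum. Two inputs drive this: an ascending chain condition, which produces a maximal element, and directedness, which upgrades it to a maximum. At the outset I would record that \(S_m(\ell)\) is nonempty --- the trivial ultrapower embedding \(\text{id}:V\to V\) lies in \(S_m(\ell)\), witnessed by taking \(k=\ell\), since every ultrapower embedding of \(V\) is internal --- and that every \(j\in S_m(\ell)\) satisfies \(j\D\ell\), so \(S_m(\ell)\) is bounded above by \(\ell\).

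To extract a \(\D\)-maximal element I would argue by contradiction through the local ascending chain condition (\cref{RFProps}). Working in the partial order of isomorphism types, if \(S_m(\ell)\) had no \(\D\)-maximal element, then beginning from any element of the nonempty set \(S_m(\ell)\) and repeatedly passing to a strict \(\D\)-successor inside \(S_m(\ell)\) would yield an infinite strictly ascending chain \(j_0\sD j_1\sD j_2\sD\cdots\), every term below \(\ell\); this is exactly what the local ascending chain condition forbids. Hence a \(\D\)-maximal \(j^*\in S_m(\ell)\) exists.

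For directedness I would invoke closure under canonical comparisons. Given \(j_0,j_1\in S_m(\ell)\), \cref{CanonicalExistence} supplies their canonical comparison \((i_0,i_1):(M_0,M_1)\to M\). The embedding \(i_0\circ j_0=i_1\circ j_1\) is a common \(\D\)-upper bound of \(j_0\) and \(j_1\), witnessed by the internal ultrapower embeddings \(i_0\) and \(i_1\), and by \cref{SDirected} it again belongs to \(S_m(\ell)\). Thus any two members of \(S_m(\ell)\) have an upper bound inside \(S_m(\ell)\). To finish, for an arbitrary \(j\in S_m(\ell)\) directedness gives \(j'\in S_m(\ell)\) with \(j\D j'\) and \(j^*\D j'\); maximality of \(j^*\) rules out \(j^*\sD j'\), so \(j'\) is \(\D\)-equivalent to \(j^*\), and therefore \(j\D j'\D j^*\). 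Hence \(j^*\) is the sought \(\D\)-maximum.

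The step demanding the most care is the interface between the embedding formulation of \(\D\) in which \(S_m(\ell)\) is defined and the isomorphism-type formulation in which the local ascending chain condition is phrased: one must check that a strictly \(\D\)-ascending sequence of embeddings descends to a strictly ascending chain of isomorphism types bounded by the type of \(\ell\), so that \cref{RFProps} applies. Once this bookkeeping is in place the chain-condition step runs, and the remainder is the standard ``maximal equals maximum in a directed order'' deduction.
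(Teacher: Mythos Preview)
Your proposal is correct and follows essentially the same route as the paper: obtain a \(\D\)-maximal element of \(S_m(\ell)\) from the local ascending chain condition, then upgrade maximal to maximum using directedness via closure under canonical comparisons (\cref{SDirected}). Your version simply spells out more of the details the paper leaves implicit --- nonemptiness, boundedness by \(\ell\), and the passage between the embedding and isomorphism-type formulations of \(\D\).
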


\begin{cor}[UA]\label{SChar}
Suppose \(\ell : V \to N\) is an ultrapower embedding and \(p\) is the Dodd parameter of \(\ell\). For all \(m < \textnormal{lth}(p)\), there is a unique \(j_m \sD \ell\) such that \(S_m(\ell) = \{j : j \D j_m\}\).
\begin{proof}
Let \(j_m\) be the \(\D\)-maximum element of \(S_m(\ell)\). Then by \cref{SClosure}, \(S_m(\ell) = \{j : j \D j_m\}\). By the definition of \(S_m(\ell)\), \(j_m \D \ell\). Finally by \cref{SStrict}, in fact \(j_m \sD \ell\).
\end{proof}
\end{cor}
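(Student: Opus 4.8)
The plan is to read off \(j_m\) as the \(\D\)-maximum element of \(S_m(\ell)\)—whose existence was just established—and then check that \(S_m(\ell)\) is exactly the principal \(\D\)-ideal this element generates. The substantive content, namely that \(S_m(\ell)\) is directed (closure under canonical comparisons, \cref{SDirected}) together with the absence of an infinite strictly ascending \(\D\)-chain below \(\ell\) (the local ascending chain condition), has already been spent in producing the maximum, so what remains is bookkeeping. Concretely, let \(j_m\) be the \(\D\)-maximum element of \(S_m(\ell)\) furnished by the preceding corollary.

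First I would verify the set equality \(S_m(\ell) = \{j : j\D j_m\}\) by proving both inclusions. The inclusion \(S_m(\ell)\subseteq \{j : j\D j_m\}\) is immediate, since \(j_m\) lies \(\D\)-above every member of \(S_m(\ell)\). For the reverse inclusion, suppose \(j\D j_m\); as \(j_m\in S_m(\ell)\) and \(S_m(\ell)\) is downward closed under \(\D\) by \cref{SClosure}, we conclude \(j\in S_m(\ell)\).

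Next I would show \(j_m\sD \ell\). Since \(j_m\in S_m(\ell)\), the definition of \(S_m(\ell)\) directly supplies an internal ultrapower embedding witnessing \(j_m\D \ell\). To sharpen this to strictness it suffices to exclude \(j_m = \ell\); but \(\ell\notin S_m(\ell)\) by \cref{SStrict}, while \(j_m\in S_m(\ell)\), so \(j_m\neq \ell\) and hence \(j_m\sD \ell\).

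Finally, uniqueness holds because any generator with the stated property is forced to be the \(\D\)-maximum of \(S_m(\ell)\): if \(j_m\) and \(j_m'\) both satisfied \(S_m(\ell) = \{j : j\D j_m\} = \{j : j\D j_m'\}\), then \(j_m\D j_m'\) and \(j_m'\D j_m\); composing the two witnessing internal ultrapower embeddings gives an elementary self-embedding of the target of \(j_m\) fixing \(j_m\), which is the identity by \cref{RKFact}, so the two embeddings are mutually inverse and \(j_m = j_m'\). I do not expect a genuine obstacle in this corollary, since the hard directedness-plus-chain-condition argument is already packaged into the existence of the maximum; the only point needing a moment's care is the antisymmetry of \(\D\) on ultrapower embeddings used for uniqueness, and that is a quick consequence of \cref{RKFact}.
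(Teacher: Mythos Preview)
Your proposal is correct and follows the paper's argument essentially verbatim: take \(j_m\) to be the \(\D\)-maximum of \(S_m(\ell)\), use \cref{SClosure} for the set equality, and \cref{SStrict} to rule out \(j_m=\ell\). The only addition is that you spell out the uniqueness clause via antisymmetry of \(\D\) (using \cref{RKFact}), which the paper leaves implicit; this is fine and the argument is right.
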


We can now prove \cref{RFFinite2}.

\begin{proof}[Proof of \cref{RFFinite2}]
Assume by induction that for all ultrapower embeddings \(i \sD \ell\), \(\{j : j\D i\}\) is finite; we will show \(\{j : j\D \ell\}\) is finite. Let \(p\) be the Dodd parameter of \(\ell\). Then for each \(m < \text{lth}(p)\), \(S_m(\ell)\) is finite, since by \cref{SChar} there is some \(j_m \sD \ell\) such that \(S_m(\ell) = \{j : j \D j_m\}\). Note that \[\{j : j \sD \ell\} = \bigcup_{m < \text{lth}(p)} S_m(\ell)\]
Therefore \(\{j : j \sD \ell\}\) is finite, since it is contained in a finite union of finite sets. Therefore \(\{j : j \D \ell\} = \{\ell \}\cup \{j : j \sD \ell\}\) is also finite.
\end{proof}
\section{The Irreducible Ultrafilter Hypothesis}
In this section, we comment on some deeper results related to the study of the Rudin-Frol\'ik order under UA and state some open problems.

The main development since these results is some progress on the general analysis of irreducible ultrafilters.
\begin{defn}
Suppose \(X\) is a set. The {\it Fr\'echet filter on \(X\)} is the filter \(\mathcal F\) generated by sets \(A\subseteq X\) such that \(|X\setminus A| < |X|\). An ultrafilter \(U\) on \(X\) is {\it Fr\'echet uniform} if \(U\) extends the Fr\'echet filter on \(X\).
\end{defn}

Fr\'echet uniform ultrafilters are often just called uniform ultrafilters, but we prefer to distinguish between the two notions of uniformity. The following is a version of one of the main theorems of \cite{SC}.

\begin{thm}[UA]
Suppose \(\lambda\) is a strong limit singular cardinal or a successor cardinal. Suppose \(U\) is a Fr\'echet uniform irreducible ultrafilter on \(\lambda\). Then \(M_U\) is closed under \(\lambda\)-sequences.\qed
\end{thm}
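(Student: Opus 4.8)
The plan is to reduce the theorem to the single assertion that $j_U[\lambda]\in M_U$ and then to derive a contradiction from its failure by producing a nonprincipal Rudin-Frol\'ik predecessor of $U$ that is not isomorphic to $U$. For the reduction, recall the standard equivalence: an ultrapower embedding $j\colon V\to M$ satisfies ${}^{\lambda}M\subseteq M$ if and only if $j[\lambda]\in M$. The forward implication is trivial; conversely, since $M_U=\{j_U(f)([\text{id}]_U):f\in V\}$, given $\langle x_\xi:\xi<\lambda\rangle\in{}^{\lambda}M_U$ and $f_\xi\in V$ with $x_\xi=j_U(f_\xi)([\text{id}]_U)$, the sequence $\langle x_\xi:\xi<\lambda\rangle$ is definable inside $M_U$ from $j_U(\langle f_\xi:\xi<\lambda\rangle)$, the seed $[\text{id}]_U$, and the map $j_U\restriction\lambda$, which is available once $j_U[\lambda]\in M_U$. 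So it suffices to show $j_U[\lambda]\in M_U$.

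Suppose not, and let $\gamma\le\lambda$ be least with $j_U[\gamma]\notin M_U$. Then $\gamma$ is a regular cardinal: if $\text{cf}(\gamma)=\mu<\gamma$, then $j_U[\gamma']\in M_U$ for every $\gamma'<\gamma$ by minimality, whereas the $\mu$-sequence enumerating a cofinal family of these cannot belong to $M_U$ (its union would be $j_U[\gamma]$), so $j_U\restriction\mu\notin M_U$, contradicting the minimality of $\gamma$. Likewise $\text{crit}(j_U)<\gamma$, since $j_U[\text{crit}(j_U)]=\text{crit}(j_U)\in M_U$. I now aim to build a nonprincipal countably complete ultrafilter $D$ with $D\D U$ but $M_U\subsetneq M_D$. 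Since isomorphic ultrafilters induce the same ultrapower, such a $D$ cannot be isomorphic to $U$; hence by the irreducibility of $U$ it must be principal --- a contradiction.

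To construct $D$, informally ``the part of $U$ that acts on $\gamma$'', I would collapse the hull $H=H^{M_U}(j_U[V]\cup\sup j_U[\gamma])$, writing $\pi\colon H\to M^*$ for the transitive collapse, $j^*=\pi\circ j_U$, and $k=\pi^{-1}\colon M^*\to M_U$, so that $j_U=k\circ j^*$ and $\text{crit}(j^*)=\text{crit}(j_U)$. The embedding $k$ is nontrivial: otherwise $M_U$ would be generated over $j_U[V]$ by ordinals below $\sup j_U[\gamma_0]$ for some $\gamma_0<\gamma$, which would make $U$ isomorphic to an ultrafilter on a set of size $<\lambda$, contradicting the Fr\'echet uniformity of $U$. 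The difficulty --- and the point where the Ultrapower Axiom is indispensable --- is that $M^*$ is a priori an ultrapower of $V$ by a $\gamma$-generated extender rather than by a single ultrafilter, so $k$ need not be an internal ultrapower embedding and $D$ does not drop out directly. To repair this I would use the comparison calculus of Section 5 --- the existence of canonical comparisons and the fact that they are pushouts (\cref{CanonicalExistence}, \cref{Pushout}) --- together with the Ultrafilter Factorization Theorem (\cref{Factorization}) and the local ascending chain condition (\cref{ACC}), in order to replace $M^*$ by an honest finite iterated ultrapower of $V$ sitting below $M_U$; its first ultrafilter $D$ is then nonprincipal, satisfies $D\D U$ and $M_U\subsetneq M_D$, and concentrates on a set of cardinality at most $2^{<\gamma}$.

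Finally, the hypothesis on $\lambda$ enters to guarantee $D\not\cong U$. If $\lambda$ is a strong limit singular cardinal, then $\gamma$, being regular, lies strictly below $\lambda$, and $2^{<\gamma}<\lambda$ because $\lambda$ is a strong limit; hence $D$ lives on a set of size $<\lambda$ and cannot be isomorphic to the Fr\'echet uniform ultrafilter $U$ on $\lambda$, so irreducibility is contradicted. If $\lambda$ is a successor cardinal the cardinality bound above need not fall below $\lambda$, and one argues more delicately, again exploiting the minimality of $\gamma$ (so that $M_U$ is already $<\lambda$-closed) together with the seed-order machinery of Sections 6 and 7, in the spirit of the proof of \cref{ACC}, to contradict the wellfoundedness of the seed order. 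I expect the crux of the argument, and its main obstacle, to be the upgrade step of the third paragraph: distilling from the collapsed hull $M^*$ --- which on its own yields only a $\gamma$-generated extender ultrapower --- a genuine countably complete ultrafilter that is an honest Rudin-Frol\'ik predecessor of $U$. This is exactly the point at which ZFC alone does not suffice and the Ultrapower Axiom does the work.
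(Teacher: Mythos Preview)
This theorem is not proved in the paper: it is stated with a \qed and attributed to \cite{SC} (``a version of one of the main theorems of \cite{SC}''). There is therefore no in-paper argument to compare your proposal against.

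As to the proposal itself: the reduction to $j_U[\lambda]\in M_U$ and the isolation of a least regular $\gamma\le\lambda$ with $j_U[\gamma]\notin M_U$ are correct and standard, and the overall strategy --- manufacture a nonprincipal proper Rudin--Frol\'ik predecessor of $U$ and contradict irreducibility --- has the right shape. But what you have written is a plan, not a proof, and the two steps you yourself flag as gaps are genuine obstacles rather than routine details.

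The ``upgrade step'' is the real problem. The collapsed hull $M^*$ is an extender ultrapower, and none of the results you invoke converts it into a single-ultrafilter Rudin--Frol\'ik predecessor of $U$: canonical comparisons and the pushout property operate on pairs of \emph{ultrapower} embeddings of $V$, and $j^*$ is not yet known to be one; the Factorization Theorem applied to the irreducible $U$ is vacuous; and the local ascending chain condition bounds chains of predecessors but does not produce any. You have correctly located where UA must do the work, but you have not indicated a mechanism by which the cited lemmas actually perform it. The successor case, which you leave as ``one argues more delicately'', is substantially harder than the singular case --- your size bound $2^{<\gamma}$ need not fall below $\lambda$ --- and is not a variant of the seed-order descent in \cref{ACC}. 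In short, the proposal identifies the target and the obstruction but does not supply the bridge; for that one needs the finer machinery developed in \cite{SC}, which goes well beyond what is available in this paper.
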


Combined with \cref{Factorization}, this yields a great deal of structure for arbitrary countably complete ultrafilters. Our main open question is whether a complete analysis of irreducible ultrafilters is possible under UA.
\begin{iuh}
Suppose \(U\) and \(W\) are Fr\'echet uniform irreducible ultrafilters. Then either \(U\mo W\), \(W\mo U\), or \(U\) and \(W\) are isomorphic.
\end{iuh}
\begin{qst}
Does UA imply the Irreducible Ultrafilter Hypothesis?
\end{qst}
Assuming the Irreducible Ultrafilter Hypothesis, one can explicitly calculate the comparisons of any pair of ultrafilters in terms of their factorizations into irreducible ultrafilters, so in some sense the Irreducible Ultrafilter Hypothesis (if true) gives a complete explanation of the Ultrapower Axiom. This picture seems slightly too simple; we conjecture that the Irreducible Ultrafilter Hypothesis is refutable from a large cardinal hypothesis.
\bibliography{Bibliography}{}
\bibliographystyle{unsrt}

\end{document}